\theoremstyle{definition}
\newtheorem{remark}{\bf Remark}
\newtheorem{example}{\bf Example}
\theoremstyle{plain}
\newtheorem{corollary}{\bf Corollary}
\newtheorem{lemma}{\bf Lemma}
\newtheorem{proposition}{\bf Proposition}
\newtheorem{theorem}{\bf Theorem}
{}
\numberwithin{equation}{section}
\DeclareMathOperator{\Id}{Id}
\DeclareMathOperator{\Ric}{Ric}
\DeclareMathOperator{\Area}{Area}
\theparentequation\alph{equation}}
\begin{document}

\title[Mean curvature flow in an extended Ricci flow background]{Mean curvature flow in an extended Ricci flow background}
\author[José Gomes]{José N. V. Gomes$^1$}
\author[Matheus Hudson]{Matheus Hudson$^2$}
\address{$^1$Programa de Pós-Graduação em Matemática (PPGM), Centro de Ciências Exatas e Tecnologia (CCET), Universidade Federal de São Carlos, São Carlos, São Paulo, Brazil.}
\address{$^2$Programa de Doutorado em Matemática (PDM) em Associação Ampla UFPA-UFAM, Instituto de Ciências Exatas (ICE), Universidade Federal do Amazonas, Manaus, Amazonas, Brazil.}
\email{$^1$jnvgomes@ufscar.br; jnvgomes@pq.cnpq.br}
\email{$^2$matheushudson10@gmail.com}
\urladdr{$^1$https://www2.ufscar.br}
\urladdr{$^2$https://ufam.edu.br}
\keywords{Gibbons-Hawking-York action, extended Ricci flow, Mean curvature flow, Huisken monotonicity}
\subjclass[2010]{53C44}

\begin{abstract}
 In this paper, we consider functionals related to mean curvature flow in an ambient space which evolves by an extended Ricci flow from the perspective introduced by Lott when studying a mean curvature flow in a Ricci flow background. One of them is a  weighted extended version of the Gibbons-Hawking-York action  on Riemannian metrics in compact manifolds with boundary. We compute its variational properties from which naturally arise boundary conditions to the analysis of its time-derivative under Perelman's modified extended Ricci flow. For instance, the boundary integrand term provides an extension of Hamilton's differential Harnack expression for mean curvature flows in Euclidean space. We also derive the evolution equations for both the second fundamental form and the mean curvature under mean curvature flow in an extended Ricci flow background. In the special case of gradient solitons to the extended Ricci flow, we discuss mean curvature solitons and establish a Huisken's monotonicity-type formula. We show how to construct a family of mean curvature solitons and establish a characterization of such a family. Also, we show how for constructing examples of mean curvature solitons in an extended Ricci flow background.
\end{abstract}
\maketitle

\section{Introduction}
One of the greatest mathematical achievements of this century was the proof of Thurston’s Geometrization Conjecture, by Perelman, which, as a consequence, settled affirmatively the celebrated Poincaré's Conjecture. The main tool used by Perelman in his proofs was the \emph{Ricci flow}, introduced  by Hamilton~\cite{Hamilton92}, which is defined as follows. Let $\mathcal G:=\{g(t)\}_{t\in[a,b]}$ be a smoothly varying family of Riemannian metrics on an  $n(\geqslant 3)$-dimensional compact smooth manifold $M.$ One says that $\mathcal G$ satisfies the Ricci flow equation if
\begin{equation} \label{eq-ricciflow}
\frac{\partial}{\partial t} g(t)=-2\Ric_{g(t)} \,\,\, \forall t\in[a,b],
\end{equation}
where $\Ric_{g(t)}$ denotes the Ricci curvature of $(M,g(t)).$

Hamilton established existence and uniqueness of solutions to \eqref{eq-ricciflow} in a maximal interval $[0,T),$ $T\leqslant +\infty$, for any given initial  metric $g=g(0).$ This maximal solution is then called the \emph{Ricci flow} with initial condition $g,$ and $T$ (whenever finite)  is called the \emph{blow-up time} of the flow.

An important geometric flow  also considered by Hamilton~\cite{Hamilton} was the celebrated \emph{mean curvature flow} (MCF, for short), which falls in the class of \emph{extrinsic geometric flows} (see \eqref{mean:sys}). Since then, mean curvature flow has been a constant object of investigation, and  has experienced a great development in the last decades. It should also be mentioned that MCF has   applications in many fields, including  geometric analysis, geometric measure theory, and partial differential equations, to name a few.

A significant contribution given by  G. Perelman in the study of the Ricci flow on smooth manifolds was the discovery of its gradient-like structure, namely, he showed how the Ricci flow can be regarded as a gradient flow  from its $\mathcal F$-functional on compact manifolds with weighted preserving-measure (see \cite[Sects.~1~and~3]{Perelman} and~\cite[Sects.~10~and~12]{Kleiner_Lott}).  Moreover, he defined an associated entropy by means of its $\mathcal W$-functional.

In a similar way, List showed how the extended Ricci flow (see~\eqref{ListEq}) can also be regarded as a gradient flow (cf.~\cite[Lem.~3.4~and~Thm.~6.1]{Bernhard_List}).  Moreover, he proved the existence of a Perelman $\mathcal F$-type functional such that the stationary points are solutions to the static Einstein vacuum equations and studied an extended parabolic system  which is equivalent to the gradient flow of his functional. However, while in such flows on manifolds with boundary, one loses the sense of gradient-like structures,  interesting properties appear when the boundary evolves by some geometric flow.

Ecker defined a version of Perelman's $\mathcal W$-functional for Ricci flow on bounded domains with smooth boundary. In its time-derivative appears Hamilton's differential Harnack expression on the boundary integrand (see~\cite{Hamilton}). In this work, he conjectured that his functional is nondecreasing in time under the mean curvature flow of any compact hypersurface in $\mathbb{R}^n,$ see~~\cite[Sects.~2 and 3]{Klaus-Ecker} for the definition of Ecker's functional and its time-derivative.

Inspired by Ecker's work, Lott~\cite{John_Lott} approached mean curvature flow in arbitrary  Ricci flow background by introducing an analogue of Perelman's $\mathcal F$-functional for a manifold $M$ with boundary $\partial M$. More precisely, he added a boundary term to the interior integral of $\mathcal F$, obtaining then a weighted version $I_\infty$ of the Gibbons-Hawking-York action~\cite{Gibbons-Hawking, York}, see also Araújo~\cite{Ara03}. In a similar way, one can think of an analogous conjecture for weighted Gibbons-Hawking-York action $I_\infty$ under the mean curvature flow in a Ricci flow  background, which is still an open problem. In both cases, an answer for these open problems required a study on the boundary integrand of the time-derivative of these functionals. In this setting, the main results obtained by Lott include the determination of the evolution equations of the action $I_\infty,$ of the second fundamental form of $\partial M,$ and of the mean curvature of $\partial M$ under Perelman's modified Ricci flow.

In the present paper, we intend to consider Lott's program in the context of mean curvature flow in an extended Ricci flow background. To be more precise, let $M$ be an $n (\geqslant 3)$-dimensional smooth manifold and  let $(g(t), w(t))$  be a solution  to the \emph{extended Ricci flow}
\begin{align}\label{ListEq}
\left\{
\begin{array}{lcl}
\frac{\partial}{\partial t}g(t) = - 2\Ric_{g(t)} + 2\alpha_n dw(t)\otimes dw(t),\\[1ex]
\frac{\partial}{\partial  t}w(t) =  \Delta_{g(t)} w(t),
\end{array}
\right.
\end{align}
in $M \times [0,T),$ for some initial value $(g,w).$ Here and throughout the paper, $\alpha_n=(n - 1)/(n - 2)$, ${\rm Ric}_{g(t)}$ stands for the Ricci tensor of the Riemannian metric $g(t)$, the Laplacian operator $\Delta_{g(t)}$ is the trace of the Hessian operator $\nabla_{g(t)}^2$ computed on $g(t)$, and $dw(t)\otimes dw(t)$ denotes the tensor product of the $1$-form $dw(t)$ by itself, which is metrically dual to gradient vector field $\nabla w(t)$ computed on $g(t)$ of a scalar smooth function $w(t)$ on $M.$ For on account on extended Ricci flows, including a proof of short-time existence of solutions to \eqref{ListEq}, we refer to List~\cite[Thm.~4.1]{Bernhard_List}.

A gradient soliton to the extended Ricci flow is, by definition, a self-similar solution $(\overline{g}(t), \overline{w}(t) )$ of~\eqref{ListEq} given by
\begin{align*}
\left\{
\begin{array}{lcl}
\overline{g}(t)  = \sigma(t)\psi_t^* g,\\ [1ex]
\overline{w}(t) = \psi_t^* w,
\end{array}
\right.
\end{align*}
for some initial value $(g,w)$, where $\psi_t$ is a  smooth one-parameter family of diffeomorphisms of $M$ generated from the flow of $\nabla_g f/\sigma(t)$ computed on $g$, $f\in C^\infty(M)$, and  $\sigma(t)$ is a smooth positive function on $t.$ By setting $\overline{f}(t)=\psi_t^*f$,  sistem~\eqref{ListEq} becomes
\begin{align}\label{mod_grad_Ricci_soliton}
\left\{
\begin{array}{lcl}
\Ric_{\overline{g}} + \nabla_{\overline{g}}^2\,\overline{f} - \alpha_n  d\overline{w} \otimes d\overline{w} = \dfrac{c}{2t}\overline{g},\\
\Delta_{\overline{g}} \overline{w} = \langle\nabla_{\overline{g}} \overline{f}, \nabla_{\overline{g}} \overline{w}\rangle_{\overline{g}},
\end{array}
\right.
\end{align}
where $c= 0$ in the steady case (for $t \in  \mathbb{R}$ and $\psi_0=\Id$), $c = -1$ in the shrinking case (for $t \in (-\infty, 0)$ and $\psi_{-1}=\Id$) and $c = -1$ in the expanding case (for $t \in (0,\infty)$ and $\psi_1=\Id$). Moreover,
\begin{align}\label{self-solution}
\dfrac{\partial }{\partial t} \overline{f} = \|\nabla_{\overline{g}} \overline{f}\|_{\overline{g}}^2\,.
\end{align}
The function $\overline{f}$ is  then  called the \emph{potential function}. For more details, see Subsect.~\ref{sect:grad:sol}.

We shall consider mean curvature flows in the following context: let $(g(t),w(t))$ be an extended Ricci flow  in $M \times [0, T)$. Given an $(n-1)$-dimensional smooth compact manifold $\Sigma$ without boundary, let $\{x(\cdot,t); t \in [0, T)\}$ be a smooth one-parameter family of immersions of $\Sigma$ into $M$.  For each $t\in[0,T),$ set  $x_t = x(\cdot, t)$ and $\Sigma_t$ for the hypersurface $x_t(\Sigma)$ of $(M,g(t)),$ i.e., $\Sigma_t:=(x_t(\Sigma), g(t))$, and suppose  that the family  $\mathscr F :=\{\Sigma_t\,;\, t\in[0,T)\}$ evolves under mean curvature flow
\begin{align}\label{mean:sys}
\left\{
\begin{array}{rcl}
\frac{\partial}{\partial t}x(p,t) &=&  H(p,t) e(p,t),\\ [1ex]
x(p, 0) &=& x_0(p),
\end{array}
\right.
\end{align}
where $H(p,t)$ and $e(p,t)$ are the mean curvature and the unit normal of $\Sigma_t$ at the point $p\in\Sigma$, respectively. In this setting, we say that $\mathscr{F}$ is the \emph{mean curvature flow in the $(g(t),w(t))$-extended Ricci flow background}. In the particular case $(g(t),w(t))=(\overline{g}(t),\overline{w}(t))$ is a gradient soliton to the extended Ricci flow on $M$ with potential function $\overline{f}$, a hypersurface $\Sigma_t\in \mathscr F$ is a \emph{mean curvature soliton}, if
\begin{align*}
H(p,t)+e(p,t)\overline{f}=0 \,\,\, \forall p\in\Sigma_t.
\end{align*}
Here, $e(\,\cdot\,,t)$ must be the inward unit normal vector field on $\Sigma_t.$

Now suppose that $M$ is an $n(\geqslant 3)$-dimensional compact smooth manifold with boundary $\partial M$. Let ${\rm met}(M)$ be the set of all metrics $g$ on $M.$ We define the  \emph{weighted extended Gibbons-Hawking-York (GHY, for short) action} $I^{\alpha_n}_\infty$  on the product $\mathscr P(M):={\rm met}(M)\times C^\infty (M)\times C^\infty(M)$ as
\begin{align}\label{MGHYF}
I^{\alpha_n}_\infty(g, f, w):=\int_{M} \Big(R_\infty -\alpha_n |\nabla w|^2 \Big)e^{-f} dV + 2\int_{\partial M}H_\infty e^{-f}dA,
\end{align}
where $R_\infty:= R + 2 \Delta f - |\nabla f|^2$ is the  \emph{weighted scalar curvature} of $g$, the function $H_\infty:=H + e_0 f$   is  the \emph{weighted mean curvature} with respect to the inward unit normal vector field $e_0$ on $\partial M$, the forms $dV$ and $dA$ are the $n$-dimensional Riemannian measure of $(M,g),$ and the $(n-1)$-dimensional Riemannian measure of $(\partial M,g),$ respectively.

The action $I^{\alpha_n}_\infty$ is the proper extension to our context of the action $I_\infty$ introduced by Lott in~\cite{John_Lott}. It should also be mentioned that the function $R_\infty$ arises quite naturally, as observed by Perelman~\cite[Sect.~1.3]{Perelman}, and $H_\infty$ is in fact the appropriate geometric object when we are using a weighted measure (see, e.g.,~\cite[Sect.~9.4.E]{Gromov} or \cite[Subsec.~3.3]{John_Lott}).

Our first main result extends \cite[Theorem 1]{John_Lott} to the context of mean curvature flow in an extended Ricci flow background. It reads as follows (see Sections~\ref{Sec:preliminaries} and~\ref{Sec:VWEGHYA} for definitions and notations).  

\begin{theorem}\label{principal_theorem}
Let $M$ be an $n(\geqslant 3)$-dimensional compact smooth manifold with boundary $\partial M$, and let $I^{\alpha_n}_\infty$ be the weighted extended GHY-action on  $\mathscr P(M)$ defined as in \eqref{MGHYF}. Suppose that the family $\{{\partial M}_t\,;\, t\in[0,T)\}$ is a MCF in the $(g(t),w(t))$-extended Ricci flow background which satisfies $e_0 w = 0$ on $\partial M,$ where $e_0$ is the inward unit normal vector field on $\partial M.$ Under these conditions, if $u:=e^{-f}$ is a solution to the conjugate heat equation
\begin{align}\label{back:heat:eq}
\frac{\partial}{\partial t}u=-\Delta u+Ru-\alpha_n |\nabla w|^2u
\end{align}
in $M\times[0,T)$, with $e_0 u=Hu$ on $\partial M,$ then
\begin{align*}
\dfrac{d}{d t}I_\infty^{\alpha_n} &= 2 \int_{M} \Big(|{\rm Ric}\, + \nabla^2 f -\alpha_n dw \otimes dw|^2 + \alpha_n\big(\Delta w - \langle \nabla w, \nabla f \rangle\big)^2\Big) e^{-f}dV \\
&\quad + 2 \int_{\partial M} \Big(\frac{\partial H}{\partial t}  - 2\langle \widehat{\nabla} f, \widehat{\nabla} H\rangle + \mathcal{A}
(\widehat{\nabla} f, \widehat{\nabla} f)  + 2 R^{0 i}\widehat{\nabla}_i f  - \dfrac{1}{2} \nabla_0 R -HR_{00}   \\
&\quad +  \alpha_n    \mathcal{A}( \widehat{\nabla} w, \widehat{\nabla} w) \Big)  e^{-f} dA,
\end{align*}
where $\mathcal{A}$ is the second fundamental form of $\partial M,$ and $\widehat{\nabla}f$ denotes the gradient of $f$ on $(\partial M,g(t)).$ 
\end{theorem}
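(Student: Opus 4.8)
The plan is to differentiate the functional $I_\infty^{\alpha_n}$ in \eqref{MGHYF} directly, keeping the interior and boundary contributions separate and assembling the boundary terms only at the end. The three inputs I would rely on are the first-variation formulas of Proposition~\ref{var:MWGHY}, the extended Ricci flow background \eqref{ListEq} together with the conjugate heat equation \eqref{back:heat:eq} for $u=e^{-f}$, and the transport formula for an integral over a region whose boundary $\partial M_t$ moves by mean curvature. As a first reduction, note that since $u=e^{-f}$ the hypothesis $e_0u=Hu$ is equivalent to $e_0f=-H$, that is $H_\infty=H+e_0f=0$ on $\partial M$ for every $t$. Consequently the explicit boundary term $2\int_{\partial M}H_\infty e^{-f}\,dA$ vanishes identically in $t$, so it contributes nothing to $\tfrac{d}{dt}I_\infty^{\alpha_n}$ and every surviving boundary term must be produced by the interior integral. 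This leaves the task of differentiating $\int_M\big(R_\infty-\alpha_n|\nabla w|^2\big)e^{-f}\,dV$.

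For the interior I would run the List-extended version of Perelman's $\mathcal{F}$-computation. Under \eqref{ListEq} one has $\partial_t(e^{-f}\,dV)=-\Delta(e^{-f})\,dV$, which is exactly the content of \eqref{back:heat:eq} once $\partial_t\,dV$ is taken into account; combining this with the evolution of $R$, of $|\nabla w|^2$, the Bochner formula, and the weighted Laplacian $\Delta_f=\Delta-\langle\nabla f,\nabla\,\cdot\,\rangle$ (which is where the drift $\langle\nabla w,\nabla f\rangle$ enters despite $\partial_tw=\Delta w$), the integrand differentiates into $2\big(|Ric+\nabla^2f-\alpha_n\,dw\otimes dw|^2+\alpha_n(\Delta w-\langle\nabla w,\nabla f\rangle)^2\big)e^{-f}$ plus a total divergence; the second squared term is the genuinely new piece coming from the $w$-equation \eqref{grad:form:eq2}. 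On a manifold with boundary the divergence integrates to boundary terms of the shape $\int_{\partial M}e_0(\,\cdot\,)\,e^{-f}\,dA$, and to these one must add the flux term $\int_{\partial M}\big(R_\infty-\alpha_n|\nabla w|^2\big)e^{-f}V\,dA$ coming from the motion of $\partial M_t$ (with $V$ its normal speed).

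The heart of the argument, and the step I expect to be the main obstacle, is to collapse this collection of boundary terms into the Harnack-type integrand displayed in the theorem. Here I would substitute the evolution equations for the second fundamental form $\mathcal{A}$ and the mean curvature $H$ of $\partial M_t$ under mean curvature flow in the extended Ricci flow background (derived earlier in the paper), which is where $\tfrac{\partial H}{\partial t}$ appears; then decompose every ambient tensor into a tangential part, recorded by $\widehat{\nabla}$ and $\mathcal{A}$, and a normal part, recorded by $e_0$, $R_{00}$, $R^{0i}$, and $\nabla_0$; and finally eliminate the leftover normal derivatives using $H_\infty=0$ and $e_0w=0$. The delicate points are the sign conventions attached to the inward normal $e_0$ and to the mean curvature flow velocity, and the repeated passage between the ambient derivative $\nabla$ and the boundarywise derivative $\widehat{\nabla}$; getting these consistent is precisely what makes the flux term cancel the $R_\infty$-type contributions and leaves only the stated combination. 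This is the computation packaged in Proposition~\ref{mean_curv_flow_in_a_mod}, from which the theorem is the specialization to $H_\infty=0$, $e_0w=0$.

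For the concluding ``in particular'' statement I would rewrite the boundary integrand so that each summand carries one of the two components of the modified Ricci tensor $Ric+\nabla^2f-\alpha_n\,dw\otimes dw$ as a factor, namely its tangential-tangential part $\big(R_{ij}+\nabla_i\nabla_jf-\alpha_n\nabla_iw\nabla_jw\big)|_{\partial M}$ and its mixed part $\big(R_{i0}+\nabla_i\nabla_0f\big)|_{\partial M}$. The tools for this repackaging are the contracted second Bianchi identity, which converts $\nabla_0R$ and $R^{0i}$ into divergences of the Ricci tensor, the Codazzi equation, which relates $\widehat{\nabla}H$, $\mathcal{A}$ and the mixed curvature term, and the flow definition of $\tfrac{\partial H}{\partial t}$. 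Once the integrand is expressed in this factored form, the vanishing of both components on $\partial M$ forces the boundary integrand to vanish.
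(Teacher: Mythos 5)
Your outline is correct in substance and would lead to the stated formula, but it is a genuinely different presentation from the paper's. You work in the ``unfixed gauge'': the boundary really moves, so you must combine the Stokes boundary terms produced by the total divergence in the Perelman--List interior computation with a Reynolds-type flux term $\int_{\partial M}\big(R_\infty-\alpha_n|\nabla w|^2\big)e^{-f}V\,dA$, and then collapse everything using the evolution equations for $\mathcal{A}$ and $H$, Codazzi, Simons and the contracted Bianchi identity. The paper instead pulls everything back by the diffeomorphisms $\phi_t$ generated by $-\nabla f$: the boundary condition $e_0 f=-H$ makes the normal component of $-\nabla f$ coincide with the mean curvature flow velocity, so the domain is frozen, $(\widetilde{g},\widetilde{w},\widetilde{f})$ satisfies Perelman's modified extended Ricci flow with $\tfrac{v}{2}-h=0$, and the already-established static-boundary formula (Theorem~\ref{alm:mon} refined to Corollary~\ref{key_prop_monotonicity}) applies verbatim; the only residue of the gauge change is $\tfrac{\partial}{\partial t}H_{\widetilde{g}}=\tfrac{\partial}{\partial t}H-\langle\widehat{\nabla}f,\widehat{\nabla}H\rangle$, which is exactly what turns the coefficient $-1$ of $\langle\widehat{\nabla}f,\widehat{\nabla}H\rangle$ in Corollary~\ref{key_prop_monotonicity} into the $-2$ of the theorem. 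The paper's route buys that the flux term and the failure of pointwise measure preservation never appear; your route buys transparency about where each boundary term originates, at the cost of much heavier bookkeeping. Two corrections to your plan: (i) Proposition~\ref{var:MWGHY} as stated requires $\tfrac{v}{2}-h=0$, which fails along the un-gauged flow \eqref{ListEq} coupled with \eqref{back:heat:eq} (there $\tfrac{v}{2}-h=\Delta f-|\nabla f|^2$), so you cannot quote it directly and must carry the extra $(\tfrac{v}{2}-h)$-terms --- this is precisely your flux/divergence bookkeeping, but it needs to be said; (ii) the final formula is not a specialization of Proposition~\ref{mean_curv_flow_in_a_mod}, which only records the evolution equations on $\Sigma_t$, but of Corollary~\ref{key_prop_monotonicity} transported by $\phi_t$. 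Your argument for the ``in particular'' clause matches the paper's, namely equation~\eqref{vanishes} combined with the contracted Bianchi identity.
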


For the proof of Theorem~\ref{principal_theorem}, we first study the \emph{Perelman’s modified extended Ricci flow} (see Section~\ref{Sec:VWEGHYA}), and then ``translate'' the results for the context of  extended Ricci flow. Also, as an application of Theorem~\ref{principal_theorem}, we obtain an extension  of Hamilton’s differential Harnack expression for mean curvature flow in Euclidean space to the more general context of mean curvature flow in a gradient steady soliton  to the extended Ricci flow (cf. Corollary~\ref{type-Harnack-background} in Section~\ref{se: Ricci}). 

Our second main result is a Huisken’s monotonicity-type formula~\cite{Huisken1990} for the mean curvature flow in an extended Ricci flow background, as stated below.
\begin{theorem}\label{Huisken_monotonicity}
Let $M$ be an $n(\geqslant3)$-dimensional smooth manifold. Let $(\overline g(t),\overline w(t))$ be a gradient soliton to the extended Ricci flow on $M$ with potential function $\overline{f}$. Assume that $\mathscr F:=\{\Sigma_t\}$ is a MCF in the $(\overline g,\overline w )$-extended Ricci flow background, denote by $dA_{\overline{g}}$ the $(n-1)$-dimensional Riemannian measure on $\Sigma_t$ induced by $\overline g(t)$, and set $\Area_{\overline{f}}(\Sigma_t):=\int_{\Sigma_t} e^{-\overline{f}}dA_{\overline{g}}$. Under these conditions, the function $\Phi(t)$ given by:
\begin{itemize}
    \item[(i)]  $\mathbb{R}\ni t\mapsto\Area_{\overline{f}}(\Sigma_t)$ in the steady case;
    \item[(ii)] $(-\infty,0)\ni t\mapsto (-t)^{-( n - 1 ) / 2}\Area_{\overline{f}}(\Sigma_t)$ in the shrinking case;
    \item[(iii)] $(0,\infty)\ni t\mapsto t^{-( n - 1 ) / 2}\Area_{\overline{f}}(\Sigma_t)$ in the expanding case;
\end{itemize}
is nonincreasing. Moreover, $\Phi(t)$ is constant if and only if $\mathscr F$ is a family of mean curvature solitons.
\end{theorem}

In the Euclidean space case, Huisken proved that shrinking self-similar solutions to the mean curvature flow are exactly singularity of type-I (i.e., the growth rate estimate for the norm of the second fundamental form is bounded) and asymptotically self-similar which appears as stationary points for the Gaussian area-type functional playing the role of the energy-type functional (see~\cite{Huisken1990} for details). Theorem~\ref{Huisken_monotonicity} is a useful tool for studying an analogous to Huiken's result to mean curvature solitons in the $(\overline g,\overline w )$-extended Ricci flow background.
 
We also show how to construct a family of mean curvature solitons and establish a characterization of such a family. In particular, we highlight the content of our Theorem~\ref{NazasCharacterization}, which guarantees that a $f$-minimal hypersurface $\Sigma$ of $(M,g)$ can be evolved as a family $\mathscr G$ of mean curvature solitons. Moreover, any family $\mathscr F$ of mean curvature solitons is given by $\mathscr G$ up to reparametrization.

We point out that, by considering particular cases of our results (for instance, assuming $g(t)$  or $w(t)$ constant), we recover some previous results on mean curvature flows (see Remarks \ref{rem: recover1}, \ref{rem: recover2}, \ref{rem: recover3}, \ref{rem: recover4} and \ref{rem: recover5}).

\section{Preliminaries} \label{Sec:preliminaries}

Throughout the paper, all   manifolds  are assumed to be orientable and connected. Also, in dealing with flows, we shall usually simplify the notation by suppressing the parameter $t.$

We shall adopt the following notation. Given an $n$-dimensional manifold $M$ with nonempty boundary $\partial M,$ we shall denote the local coordinates at $p\in M$ by $\{x^\alpha\}_{\alpha = 0}^{n-1}$ and  the local coordinate basis by $\{\partial_{\alpha}\}_{\alpha = 0}^{n - 1}$. Near $\partial M$, we take $x^0$ to be a local defining function for $\partial M$. We denote the local coordinates for $\partial M$ by $\{x_i\}^{n-1}_{i=1}$. We choose these coordinates near a point at $\partial M$ so that $\partial_0 \big|_{\partial M}$ coincides with the inward-pointing unit normal field $e_0$ on $\partial M$. The Greek letters $\alpha, \beta, \ldots$ stand for the indices associated with the coordinates on $M$, whereas $i, j, \ldots$ stand for the indices of the coordinates on $\partial M$.

For a Riemannian metric $g$ on $M,$ we denote by $\nabla$ the Levi-Civita connection on $TM$ and by $\widehat{\nabla}$ the Levi-Civita connection on $T\partial M$. The curvature tensor of $g$ in local coordinates is given by
$$R^\zeta\!_{\alpha\beta\gamma}\partial_\zeta=R(\partial_\alpha,\partial_\beta)\partial_\gamma= \nabla_\beta \nabla_\alpha \partial_\gamma -
\nabla_\alpha \nabla_\beta \partial_\gamma,$$
so that $R^\zeta\!_{\alpha\beta\gamma} = g^{\xi \zeta} R_{ \alpha \beta\gamma \xi},$ where $R_{\alpha \beta\gamma\xi} = g ( R(\partial_\alpha,\partial_\beta)\partial_\gamma, \partial_\xi).$

We also use the classical notations $v^{\alpha\beta} = g^{\alpha \gamma} g^{\beta\zeta} v_{\gamma\zeta}$ for any $2$-tensor field $v$, which yields the equalities 
$$R^{i0}= g^{ik}g^{0\alpha} R_{\alpha k}=g^{ik}R^0\!_k, \quad R_{\alpha\beta}:={\rm Ric}(\partial_\alpha, \partial_\beta).$$
In addition, we set $\nabla^\alpha w= g^{\alpha\beta}\nabla_\beta w = g^{\alpha\beta}g(\nabla w, \partial_\beta),$ which gives the following expression for the Hessian tensor:
$$\nabla^\alpha \nabla^\beta f = g^{\alpha \gamma}g^{\beta\zeta} \nabla_\gamma \nabla_\zeta f, \quad \nabla_\gamma\nabla_\zeta f:=\nabla^2 f (\partial_\gamma, \partial_\zeta).$$

In what concerns $\partial M,$ we write $\mathcal{A}_{ij}:=g(\nabla_{\partial_i} \partial_j,e_0)$ for its second fundamental form, and $H:=g^{ij}\mathcal{A}_{ij}$ for its mean curvature.  Hence,
$$\mathcal{A}^{ij} = g^{ik}g^{jl}\mathcal{A}_{kl} \quad\text{and}\quad \mathcal{A}^k\!_i=g^{kl}\mathcal{A}_{li}.$$

\section{Evolution of the weighted extended GHY-action under Perelman's modified extended Ricci flow}\label{Sec:VWEGHYA}

In this section, we obtain a variational formula for the weighted extended GHY-action $I^{\alpha_n}_\infty$ on  $\mathscr P(M):={\rm met}(M)\times C^\infty (M)\times C^\infty(M),$ where $M$ is an $n(\geqslant 3)$-dimensional smooth manifold with boundary $\partial M.$

We shall adopt the following notation. Given $(g,f,w)\in \mathscr P(M)$ and a variation $\delta g_{\alpha\beta}=v_{\alpha\beta}$ of $g,$ we shall denote by $\delta f=h$ and by $\delta w=\vartheta$ the variations of $f$ and $w$, respectively, and  write $v=g^{\alpha \beta}v_{\alpha \beta}.$ Note that the volume element $e^{-f}dV$ is measure-preserving if and only if $\frac{v}{2}-h=0$ on $M$, since $\delta(e^{-f} d V)=(\frac{v}{2}-h)e^{-f}dV$.

\begin{proposition}\label{var:MWGHY}
With the above notations, suppose that $\frac{v}{2}-h=0$ on $M.$ Then, the following equality holds:
\begin{align*}
\delta I^{\alpha_n}_\infty \!&=\! \int_{M}\!\! \Big( v^{\alpha\beta}\big(\alpha_n \nabla_\alpha w \nabla_\beta w
- R_{\alpha\beta} - \nabla_\alpha\nabla_\beta f \big) +2\alpha_n\vartheta\big(\Delta w - \langle \nabla w, \nabla f\rangle \big)\Big) e^{-f}dV\\
&\quad - \int_{\partial M} \big( v^{ij}\mathcal{A}_{ij} + v^{00}(H + e_0 f)\big) e^{-f} d A + 2\alpha_n \int_{\partial M}\vartheta e_0 w   e^{-f} dA.
\end{align*}

\begin{proof}
First, we observe that by~\eqref{MGHYF}
\begin{align*}
I^{\alpha_n}_\infty(g, f, w) = I_\infty(g, f) - \alpha_n  I_1(g, f, w),
\end{align*}
where $I_1(g, f, w) :=\int_{M} |\nabla w|^2 e^{-f}dV$. Notice that
\begin{align*}
\delta I_1= \int_{M} \Big(\delta\big(|\nabla w|^2\big) + |\nabla w|^2\Big(\frac{v}{2} - h\Big)\Big) e^{-f} dV.
\end{align*}
A straightforward computation gives us
\begin{align*}
\delta\big(|\nabla w|^2\big) = - g^{\alpha\gamma}v_{\gamma \zeta}g^{\beta\zeta}\nabla_\alpha w \nabla_\beta w + 2 g^{\alpha\beta} \nabla_\alpha\vartheta \nabla_\beta w.
\end{align*}
So,
\begin{align*}
\delta I_1= \int_{M}\Big(- v^{\alpha\beta}\nabla_\alpha w \nabla_\beta w e^{-f} + 2 g^{\alpha\beta} \nabla_\alpha\vartheta \nabla_\beta w e^{-f}\Big) dV.\\
\end{align*}
Using integration by parts, we have
\begin{align*}
\delta I_1 &= \int_{M}\Big(- v^{\alpha\beta}\nabla_\alpha w \nabla_\beta w e^{-f}+ 2 g^{\alpha \beta}\nabla_\alpha \Big(\vartheta\nabla_\beta w e^{-f} \Big) -2g^{\alpha \beta}\vartheta \nabla_\alpha \nabla_\beta w e^{-f}\\
&\quad -2g^{\alpha \beta}\vartheta \nabla_{\gamma}w \Gamma^{\gamma}_{\alpha\beta} e^{-f} + 2g^{\alpha \beta}\vartheta \nabla_\beta w \nabla_\alpha f  e^{-f} \Big) dV.
\end{align*}
Since
\begin{align*}
{\rm div} (\vartheta e^{-f}\nabla w) = g^{\alpha \beta}\nabla_\alpha \Big(\vartheta\nabla_\beta w e^{-f} \Big)-g^{\alpha \beta}\vartheta\nabla_{\gamma}w \Gamma^{\gamma}_{\alpha\beta} e^{-f},
\end{align*}
by Stokes' theorem, we get
\begin{align*}
\delta I_1&=\int_{M} \Big( - v^{\alpha\beta}\nabla_\alpha w \nabla_\beta w -2 \vartheta g^{\alpha \beta}\nabla_\alpha \nabla_\beta w e^{-f} + 2  \vartheta g^{\alpha\beta}\nabla_\beta w \nabla_\alpha f   \Big)e^{-f} dV\\
&\quad - 2\int_{\partial M} \vartheta  e_0 w \ e^{-f} d A.
\end{align*}
Proposition~2 in \cite{John_Lott} guarantees that
\begin{align*}
\delta I_{\infty}=&-\int_M v^{\alpha \beta}\left(R_{\alpha \beta}+\nabla_\alpha \nabla_\beta f\right) e^{-f} d V -\int_{\partial M}\left(v^{i j} \mathcal{A}_{i j}+v^{00}\left(H+e_0 f\right)\right) e^{-f} dA.
\end{align*}
The result of the proposition follows from these two previous equations.
\end{proof}
\end{proposition}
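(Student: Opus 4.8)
The plan is to exploit the linear decomposition of the functional already recorded in the statement, namely $I^{\alpha_n}_\infty(g,f,w) = I_\infty(g,f) - \alpha_n I_1$, where $I_1 = \int_M |\nabla w|^2 e^{-f}\,dV$ and $I_\infty$ is Lott's weighted Gibbons--Hawking--York action obtained by freezing $w$. Since the variation of $I_\infty$ is already supplied by Proposition~2 in \cite{John_Lott}, the whole task reduces to computing $\delta I_1$ under the measure-preserving constraint $\frac{v}{2}-h=0$ and then assembling $\delta I^{\alpha_n}_\infty = \delta I_\infty - \alpha_n\,\delta I_1$. This is efficient because it isolates the only genuinely new computation, the one involving $w$, while the metric-and-$f$ part is imported wholesale.

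First I would vary $I_1$ by differentiating its three ingredients separately. The weighted measure varies by $\delta(e^{-f}\,dV) = (\frac{v}{2}-h)e^{-f}\,dV$, which vanishes identically on measure-preserving variations; this is exactly the device that lets us discard every term in which the derivative lands on the measure. For the integrand, using $\delta g^{\alpha\beta} = -v^{\alpha\beta}$ and $\delta w = \vartheta$, I would obtain $\delta(|\nabla w|^2) = -v^{\alpha\beta}\nabla_\alpha w\,\nabla_\beta w + 2g^{\alpha\beta}\nabla_\alpha\vartheta\,\nabla_\beta w$. The first summand is already in final form and contributes directly against $v^{\alpha\beta}$; the second must be integrated by parts to move the derivative off $\vartheta$.

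The key step is the integration by parts on $2\int_M g^{\alpha\beta}\nabla_\alpha\vartheta\,\nabla_\beta w\,e^{-f}\,dV$. Applying the weighted divergence identity $\mathrm{div}(e^{-f}X) = e^{-f}(\mathrm{div}\,X - \langle\nabla f, X\rangle)$ with $X = \vartheta\,\nabla w$ and then Stokes' theorem on the manifold with boundary, this term splits into an interior piece $-2\int_M \vartheta(\Delta w - \langle\nabla w,\nabla f\rangle)e^{-f}\,dV$ and a boundary piece. Because $e_0$ is the inward-pointing unit normal, the divergence theorem carries the opposite sign from the usual outward convention, so the boundary contribution is $-2\int_{\partial M}\vartheta\,e_0 w\,e^{-f}\,dA$. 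Collecting the three terms yields $\delta I_1$, and multiplying by $-\alpha_n$ and adding Lott's expression for $\delta I_\infty$ converts each $I_1$-term into its stated counterpart: the weighted Ricci tensor $R_{\alpha\beta}+\nabla_\alpha\nabla_\beta f$ (together with $\alpha_n\nabla_\alpha w\,\nabla_\beta w$) pairs against $v^{\alpha\beta}$, the term $\Delta w - \langle\nabla w,\nabla f\rangle$ pairs against $2\alpha_n\vartheta$, and the two boundary integrals emerge as written.

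I expect the main obstacle to be sign discipline in the boundary term rather than any analytic difficulty: one must track that $e_0$ points inward, so that the Stokes boundary integral for $\mathrm{div}(\vartheta e^{-f}\nabla w)$ equals $-\int_{\partial M}\vartheta\,e_0 w\,e^{-f}\,dA$, and then that the prefactor $-\alpha_n$ flips this sign once more to produce the final $+2\alpha_n\int_{\partial M}\vartheta\,e_0 w\,e^{-f}\,dA$. A secondary point requiring care is the consistent use of the constraint $\frac{v}{2}-h=0$, which is what guarantees that no stray $(\frac{v}{2}-h)|\nabla w|^2$ term survives in $\delta I_1$; without it the interior integrand would pick up an extra contribution that is absent from the claimed formula.
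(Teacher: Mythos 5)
Your proposal is correct and follows essentially the same route as the paper's own proof: the decomposition $I^{\alpha_n}_\infty = I_\infty - \alpha_n I_1$, importing $\delta I_\infty$ from Lott's Proposition~2, computing $\delta(|\nabla w|^2)$, and integrating by parts with the weighted divergence and Stokes' theorem (with the sign dictated by the inward-pointing normal) to produce the interior term $-2\vartheta(\Delta w - \langle\nabla w,\nabla f\rangle)$ and the boundary term $-2\int_{\partial M}\vartheta\, e_0 w\, e^{-f}\,dA$, which the factor $-\alpha_n$ converts into the stated $+2\alpha_n\int_{\partial M}\vartheta\, e_0 w\, e^{-f}\,dA$. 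No gaps.
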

\begin{remark} \label{rem: recover1}
By considering $M$ compact without boundary in Proposition~\ref{var:MWGHY}, we recover the result by List~\cite[Eq.~(3.2)]{Bernhard_List}.
\end{remark}

We say that a family  $(g(t),w(t))\in{\rm met}(M)\times C^\infty (M),$ $t\in[0,T),$ is a \emph{Perelman's modified extended Ricci flow} if it satisfies the equations: 
 \begin{subequations}
\begin{empheq}[left=\empheqlbrace]{align}
\frac{\partial}{\partial t}g &= - 2(\Ric + \nabla^2 f - \alpha_n dw \otimes  dw),\label{grad:form:eq1}\\
\frac{\partial}{\partial t}w &=  \Delta w -\langle \nabla f, \nabla w\rangle.\label{grad:form:eq2}
\end{empheq}
\end{subequations}
and
\begin{align}\label{weighted_pr_meas}
\frac{\partial}{\partial t}f =-\Delta f-R+\alpha_n |\nabla w|^2
\end{align}
in $M\times[0,T)$, with $H+e_0 f=0$ and $e_0w = 0$ on $\partial M.$ In this case, the measure $e^{-f}dV$ remains fixed on $M.$

List~\cite[Sect.~2.1]{Bernhard_List_Thesis} showed that the Perelman's modified extended Ricci flow arises quite naturally in the study of structure-type gradient of its $\mathcal F$-functional on 
\[\mathcal{C}_1:=\Big\{(v, \vartheta) \in S^2(M) \times  C^\infty(M): v = \delta g \ , \ \vartheta = \delta w \ \mbox{and} \ \delta(e^{-f} d V)=0\Big\}.\]

Lott~\cite{John_Lott} showed that, on variations that fix both the measure $e^{-f}dV$ of $(M,g)$ and the induced metric $g_{\partial M}$ on $\partial M$, the critical points of $I_\infty$ are gradient steady Ricci solitons on $M$ that satisfy $H + e_0 f = 0$ on $\partial M$, thus, in this background, the boundary arise naturally as a $f$-minimal hypersurface, i.e., $H_\infty = 0$. Recall also that $f$-minimal hypersurfaces arise as critical points of the $f$-weighted Area functional. Some results concerning $f$-minimal hypersurfaces can be found, e.g., in \cite{alias2020mean,cheng2015stability,cheng2015simons,cheng2020minimal,wei2017lower,sheng2015}. 

From the next two corollaries, we know the critical points of the weighted extended GHY action on $\mathcal{C}_1$.
\begin{corollary}\label{Rem:WPM}
Let $M$ be an $n(\geqslant 3)$-dimensional compact smooth manifold with boundary $\partial M,$ and let  $I^{\alpha_n}_\infty$ be the weighted extended GHY-action
on  $\mathscr P(M)$ defined in~\eqref{MGHYF}. If the induced metric on $\partial M$ is fixed, then the critical points of $I_\infty^{\alpha_n}$ on $\mathcal{C}_1$ are gradient steady solitons on $M$ that satisfy $H +  e_0 f = 0$ and $e_0 w = 0$ on $\partial M$. 
\end{corollary}
\begin{proof}
By hypotheses we have $\frac{v^{\alpha}\!_{\alpha}}{2} - h = 0$ on $M$ and $v_{ij} = 0$ on $\partial M$ which allows us use Proposition~\ref{var:MWGHY} to obtain
 \begin{align}\label{var:almost}
 &\int_{M}\Big(\langle v,  \alpha_n d w \otimes d w - \Ric_g - \nabla^2_g f\rangle +2\alpha_n\vartheta\big(\Delta_g w -  \langle \nabla w, \nabla f\rangle \big) \Big) e^{-f} dV \nonumber \\
 &\quad+\int_{\partial M}\Big( 2\alpha_n \vartheta e_0 w - \langle v, (H + e_0 f) e^\flat_0 \otimes e^\flat_0\rangle  \Big)e^{-f} dA = 0,
\end{align}  
for all 
$(v, \vartheta) \in {\mathcal C}_1,$
where $  `` ^\flat "$ stands for musical isomorphism.  We first start assuming 
$(v, \vartheta) \in {\mathcal C_1}_c$.
 Then
\begin{align*}
 &\int_{M}\Big(\langle v,  \alpha_n d w \otimes d w - \Ric_g - \nabla^2_g f\rangle +2\alpha_n\vartheta\big(\Delta_g w -  \langle \nabla w, \nabla f\rangle \big) \Big) e^{-f} dV = 0.
\end{align*}  
Therefore $(g,w)$ must be a gradient steady soliton to the extended Ricci flow. Again using \eqref{var:almost} to obtain
\begin{align*}
\int_{\partial M} \langle v, (H + e_0 f) e^\flat_0 \otimes e^\flat_0\rangle e^{-f} dA = 0,
\end{align*} 
 so $H + e_0 f = 0$ on $\partial M$. To finish the proof of the corollary, again by \eqref{var:almost} we have
\[\int_{\partial M} 2\alpha_n \vartheta e_0 w e^{-f} dA = 0\]
 this implies $e_0 w = 0$ on $\partial M$. 
\end{proof}
\begin{corollary}\label{Rem:WPM1}
Let $M$ be an $n(\geqslant 3)$-dimensional compact smooth manifold with boundary $\partial M,$ and let  $I^{\alpha_n}_\infty$ be the weighted extended GHY-action on  $\mathscr P(M)$ defined in~\eqref{MGHYF}. The critical points of $I^{\alpha_n}_\infty$ on $\mathcal{C}_1$ are gradient steady solitons on $M$ with totally geodesic boundary satisfying the conditions $e_0 f = 0$ and $e_0 w = 0$ on $\partial M$. 
\end{corollary} 
\begin{proof}
The argument is very similar to the proof of Corollary~\ref{Rem:WPM}. Suppose that it has already been proven that $(g,w, f)$ is a gradient steady soliton. Since
\begin{align}\label{alm:crit}
\int_{\partial M}\Big( 2\alpha_n \vartheta e_0 w - \langle v, \mathcal A\rangle - \langle v, (H + e_0 f) e^\flat_0 \otimes e^\flat_0\rangle \Big)e^{-f} dA = 0,
\end{align}  
for all 
$(v, \vartheta) \in \mathcal C_1$ from which we obtain that the critical points are
gradient steady solitons on $M$ with totally geodesic boundary satisfying the conditions $e_0 f = 0$ and $e_0 w = 0$ on $\partial M$.
\end{proof}

 Next, we compute the time-derivative of $I_\infty^{\alpha_n}$ under Perelman’s modified extended Ricci flow.

\begin{proposition}\label{alm:mon}
Let $M$ be an $n(\geqslant 3)$-dimensional compact smooth manifold with boundary $\partial M,$ and let $I^{\alpha_n}_\infty$ be as in \eqref{MGHYF}. If $(g(t),w(t))\in{\rm met}(M)\times C^\infty (M),$ $t\in[0,T),$ is a Perelman’s modified extended Ricci flow, then the following equality holds:
\begin{align*}
\dfrac{d}{d t}I_\infty^{\alpha_n} =& 2 \int_{M} \Big(|\Ric + \nabla^2 f -\alpha_n dw \otimes dw|^2 +\alpha_n\big(\Delta w - \langle \nabla w, \nabla f \rangle\big)^2\Big) e^{-f}dV \\
& + 2 \int_{\partial M} \Big( \widehat{\Delta} H - 2 \langle \widehat{\nabla} f,  \widehat{\nabla} H \rangle + \mathcal{A}( \widehat{\nabla} f,  \widehat{\nabla} f) + \mathcal{A}^{ij} \mathcal{A}_{ij} H + \mathcal{A}^{ij}R_{ij} \\
& + 2 R^{0 i}  \widehat{\nabla}_i f -  \widehat{\nabla}_i R^{0i} - \alpha_n \mathcal{A}(\widehat{\nabla} w, \widehat{\nabla} w )\Big)e^{-f} d A.
\end{align*}
In particular, if both $\big(R_{ij} + \nabla_i \nabla_j f - \alpha_n \nabla_i w \nabla_j w\big)|_{\partial M}$ and $\big(R_{i0} + \nabla_i \nabla_0 f\big)|_{\partial M}$ vanish, then the boundary integrand vanishes.
\end{proposition}
\begin{proof}
By~\eqref{grad:form:eq1} and~\eqref{grad:form:eq2} we have $v_{\alpha\beta} = 2(\alpha_n \nabla_\alpha w \nabla_\beta w  - R_{\alpha\beta} - \nabla_\alpha\nabla_\beta f)$ and $\vartheta=\Delta w - \langle \nabla w, \nabla f\rangle,$ respectively. Tracing the equation~\eqref{grad:form:eq1} and using~\eqref{weighted_pr_meas}, we obtain $\frac{v}{2} - h = 0$ on $M$, which allows us to use Proposition~\ref{var:MWGHY} to get
\begin{align*}
\dfrac{d}{d t}I_\infty^{\alpha_n}=& 2 \int_{M} \Big(|{\rm Ric}\, + \nabla^2 f -\alpha_n dw \otimes dw|^2 +\alpha_n\big(\Delta w - \langle \nabla w, \nabla f \rangle\big)^2\Big) e^{-f}dV \\
&  + 2\int_{\partial M} \big( \mathcal{A}^{ij}( R_{ij} + \nabla_i\nabla_j f -\alpha_n \nabla_i w \nabla_j w   ) \big) e^{-f} d A,
\end{align*}
where we used that $H+e_0 f = 0$ and $e_0 w = 0$ on $\partial M$. On the other hand, Lemma~1 in Lott~\cite{John_Lott} guarantees that
\begin{align*}
&\mathcal{A}^{ij}\left(R_{i j}+\nabla_i \nabla_j f\right) e^{-f}-\widehat{\nabla}_i\Big(\left(R^{i 0}+\nabla^i \nabla^0 f\right) e^{-f}\Big) \\
=&\!\Big(\widehat{\Delta} H-2\langle\widehat{\nabla} f, \widehat{\nabla} H\rangle+\mathcal{A}(\widehat{\nabla} f, \widehat{\nabla} f)+\mathcal{A}^{i j} \mathcal{A}_{i j} H+\mathcal{A}^{i j} R_{i j}+2 R^{0 i} \widehat{\nabla}_i f-\widehat{\nabla}_i R^{0 i}\Big)e^{-f},
\end{align*}
where $ \nabla^i \nabla^{0} f = g^{ik}g^{0\alpha} \nabla_k \nabla_\alpha f$. Then
\begin{align}\label{vanishes}
&\mathcal{A}^{ij}(R_{ij} + \nabla_i \nabla_j f - \alpha_n \nabla_i w \nabla_j w) e^{-f} - \widehat{\nabla}_i \Big((R^{i0} + \nabla^i \nabla^{0} f  ) e^{-f}\Big)\\
&=\Big( \widehat{\Delta} H - 2 \langle \widehat{\nabla} f,  \widehat{\nabla} H \rangle + \mathcal{A}( \widehat{\nabla} f,  \widehat{\nabla} f) + \mathcal{A}^{ij} \mathcal{A}_{ij} H + \mathcal{A}^{ij}R_{ij} + 2 R^{0 i}  \widehat{\nabla}_i f -  \widehat{\nabla}_i R^{0i}\nonumber\\
&\quad - \alpha_n \mathcal{A}(\widehat{\nabla} w,\widehat{\nabla} w )\Big)e^{-f},\nonumber
\end{align}
and from Stokes' theorem
\begin{align*}
\int_{\partial M}\widehat{\nabla}_i \Big((R^{i0} + \nabla^i \nabla^{0} f ) e^{-f} \Big)dA = 0,
\end{align*}
which is enough to obtain the first part of the theorem. In particular, if both $\big(R_{ij} + \nabla_i \nabla_j f - \alpha_n \nabla_i w \nabla_j w\big)|_{\partial M}$ and $\big(R_{i0} + \nabla_i \nabla_0 f\big)|_{\partial M}$ vanish, then from equation~\eqref{vanishes} the boundary integrand vanishes.
\end{proof}

In our next result, we establish the evolution equations of the geometric quantities of $\partial M$ under Perelman's modified extended Ricci flow. For its proof, we shall need the following identity. 
\begin{align}\label{Simons_identity}
\widehat{\nabla}_i \widehat{\nabla}_j H &= (\widehat{\Delta} \mathcal{A})_{ij} + \widehat{\nabla}_iR_{j0}  
+\widehat{\nabla}_jR_{i0} -\nabla_0R_{ij} + \mathcal{A}^k\!_i R_{0k0j} +\mathcal{A}^k\!_j R_{0k0i}- \mathcal{A}_{ij}R_{0 0} \nonumber\\
&\quad + 2 \mathcal{A}^{kl}R_{kilj} - HR_{0i0j}- H\mathcal{A}^k\!_i \mathcal{A}_{jk} + \mathcal{A}^{kl}\mathcal{A}_{kl}\mathcal{A}_{ij} + \nabla_0 R_{0i0j}.
\end{align}

Identity~\eqref{Simons_identity} has already been observed by Lott~\cite{John_Lott}. Its proof can be obtained from Simons~\cite{James_Simons} or, alternatively, from Huisken~\cite{Huisken1986}. Indeed, in our notations, Lemma~2.1 in~\cite{Huisken1986} becomes
\begin{align*}
\widehat{\nabla}_i \widehat{\nabla}_j H &= (\widehat{\Delta} \mathcal{A})_{ij} - H\mathcal{A}_{ik}\mathcal{A}^k\!_j+ 
\mathcal{A}^{kl}\mathcal{A}_{kl}\mathcal{A}_{ij}  - HR_{0i0j}  +\mathcal{A}_{ij}R_{0 k 0}^k - \mathcal{A}^k\!_{j} R_{kli}^{l}\\
&\quad - \mathcal{A}^k\!_{i} R_{klj}^{l}  + 2 \mathcal{A}^{kl}R_{kilj} +\nabla_jR_{0ki}^k- \nabla_0 R_{ikj}^k  + \nabla_iR_{0kj}^k.
\end{align*}
Hence, equation~\eqref{Simons_identity} follows from the equality $\nabla_i R_{j0}=\widehat{\nabla}_i R_{j0}-\mathcal{A}_{ij}R_{00}+\mathcal{A}^k\!_{i}R_{jk}.$

\begin{proposition}\label{Gradient_formulate}
Let $M$ be an $n(\geqslant 3)$-dimensional compact smooth manifold with boundary $\partial M.$  If $(g(t),w(t))\in{\rm met}(M)\times C^\infty (M),$ $t\in[0,T),$ is a Perelman’s modified extended Ricci flow,  then the following evolution equations hold on $\partial M$:
\begin{align}
\frac{\partial}{\partial t}g_{ij} &= -(\mathcal{L}_{\widehat{\nabla }f}g)_{ij}-2(R_{ij}-\alpha_n \widehat{\nabla}_i w \widehat{\nabla}_j w)-2H\mathcal{A}_{ij}, \label{flow_with_mean_curv}\\
\frac{\partial}{\partial t}w &=  \widehat{\Delta} w + \nabla_0\nabla_0 w -\mathcal{L}_{\widehat{\nabla }f}w,\label{List_heat_equation}\\
\frac{\partial}{\partial t}\mathcal{A}_{ij} &= (\widehat{\Delta} \mathcal{A})_{ij} - (\mathcal{L}_{\widehat{\nabla }f}\mathcal{A})_{ij} - 
\mathcal{A}^k_i R^l_{klj} - \mathcal{A}^k_j R^l_{kli} +2 \mathcal{A}^{kl} R_{kilj} -2 H \mathcal{A}_{ik}\mathcal{A}^{k}_j \nonumber\\
&\quad + \mathcal{A}^{kl}\mathcal{A}_{kl}\mathcal{A}_{ij} + \nabla_0 R_{0 i 0 j} \label{sec_fund_evolution}
\end{align}
and
\begin{equation}\label{List_mean_evolution}
\dfrac{\partial}{\partial t}H = \widehat{\Delta}H - \langle \widehat{\nabla} f, \widehat{\nabla} H\rangle + 2 \mathcal{A}^{ij}R_{ij} + \mathcal{A}^{ij}\mathcal{A}_{ij}H + \nabla_0 R_{0 0} - 2\alpha_n \mathcal{A}(\widehat{\nabla} w, \widehat{\nabla}w).
\end{equation}
\end{proposition}
\begin{proof} 
The proof is analogous to the one give for Theorem~3 in~\cite{John_Lott}. Nevertheless, we shall present it here for the reader's convenience.

We start by substituting $\nabla_i\nabla_j f = \widehat{\nabla}_i \widehat{\nabla}_i f + H \mathcal{A}_{ij}$ (as $H + e_0f = 0$) 
into the equation~\eqref{grad:form:eq1} to get
\begin{align*}
\frac{\partial}{\partial t}g_{ij} &= - 2(R_{ij} + \widehat{\nabla}_i  \widehat{\nabla}_j f + H \mathcal{A}_{ij} - \alpha_n \widehat{\nabla}_i w \widehat{\nabla}_j w),
\end{align*}
which is equation~\eqref{flow_with_mean_curv}. Likewise, equation~\eqref{List_heat_equation} 
is only a restriction on boundary of the equation~\eqref{grad:form:eq2}, since $e_0 w = 0$.

To prove equation~\eqref{sec_fund_evolution} we first observe that by \eqref{grad:form:eq1}
\begin{align*}
\frac{1}{2}v_{\alpha\beta} = -(R_{\alpha\beta} + \nabla_\alpha\nabla_\beta f-\alpha_n \nabla_\alpha w \nabla_\beta w)
\end{align*}
and
\begin{align*}
\delta \mathcal{A}_{ij}= \frac{1}{2}(\nabla_i v_{j0} + \nabla_j v_{i0} - \nabla_0 v_{ij}) + \frac{1}{2}v_{00}\mathcal{A}_{ij}.
\end{align*}
Since $e_0 w = 0$ implies $v_{00}=R_{00} + \nabla_0 \nabla_0 f$, the previous equation is rewritten as
\begin{align*}
\dfrac{\partial}{\partial t} \mathcal{A}_{ij} &= -\nabla_i(R_{j0} + \nabla_j \nabla_0 f - \alpha_n \nabla_j w \nabla_0 w ) -\nabla_j(R_{i0} + \nabla_i \nabla_0 f- \alpha_n \nabla_i w \nabla_0 w  )\\
&\quad +\nabla_0(R_{ij} + \nabla_i \nabla_j f - \alpha_n \nabla_i w \nabla_j w) -(R_{0 0} + \nabla_0 \nabla_0 f ) \mathcal{A}_{ij} .
\end{align*}
Now we will compute some terms of this equation. The first one of them is
\begin{align*}
\nabla_i \nabla_j \nabla_0 f &= \widehat{\nabla}_i \nabla_j \nabla_0 f - \mathcal{A}_{ij} \nabla_0 \nabla_0 f + \mathcal{A}^k\!_i \nabla_j\nabla_k f.
\end{align*}
Replacing $\nabla_j \nabla_0 f = -\widehat{\nabla}_j H_g + \mathcal{A}^k\!_j \widehat{\nabla}_k f$ (since $H + e_0f = 0$), we obtain
 \begin{align*}
\nabla_i \nabla_j \nabla_0 f  &= -\widehat{\nabla}_i \widehat{\nabla}_j H + \widehat{\nabla}_i \Big( \mathcal{A}^k\!_j \widehat{\nabla}_k f \Big) - \mathcal{A}_{ij} \nabla_0 \nabla_0 f + \mathcal{A}^k\!_i \widehat{\nabla}_j\widehat{\nabla}_k f + H \mathcal{A}^k\!_i \mathcal{A}_{jk}\\
&= -\widehat{\nabla}_i \widehat{\nabla}_j H + \Big(\widehat{\nabla}_i \mathcal{A}^k\!_j\Big) \widehat{\nabla}_k f + \mathcal{A}^k\!_j\widehat{\nabla}_i\widehat{\nabla}_k f - \mathcal{A}_{ij} \nabla_0 \nabla_0 f + \mathcal{A}^k\!_i \widehat{\nabla}_j\widehat{\nabla}_k f\\
 &\quad+ H \mathcal{A}^k\!_i \mathcal{A}_{jk}.
\end{align*}
The second one of them is
\begin{align*}
\nabla_0 \nabla_i \nabla_j f - \nabla_j \nabla_i \nabla_0 f = \nabla_0 \nabla_j \nabla_i f - \nabla_j \nabla_0 \nabla_i f &= -R_{0jki} \widehat{\nabla}^k f - R_{0j0i}\nabla_0 f.
\end{align*}
The third one of them is
\begin{align*}
\nabla_0 \big(\nabla_i w \nabla_j w\big) &= \partial_0(\nabla_i w \nabla_j w) - \langle \nabla w, \nabla_{\partial_0} \partial_i\rangle\nabla_j w -  \nabla_i w \langle \nabla w, \nabla_{\partial_0} \partial_j\rangle\\
&= \nabla_i \nabla_0 w \nabla_j w + \nabla_i w \nabla_j \nabla_0 w.
\end{align*}
By interchanging  $0$ and $j$  we also obtain
\begin{align*}
\nabla_j \Big(\nabla_i w \nabla_0 w\Big) &= \nabla_i \nabla_j w \nabla_0 w + \nabla_i w \nabla_0 \nabla_j w.
\end{align*}
All this implies that
\begin{align*}
\dfrac{\partial}{\partial t} \mathcal{A}_{ij}  &= \widehat{\nabla}_i \widehat{\nabla}_j H - \big(\widehat{\nabla}_i \mathcal{A}_{kj}  - R_{0jik} \big) \widehat{\nabla}^k f- \mathcal{A}^k\!_i \widehat{\nabla}_j\widehat{\nabla}_k f - \mathcal{A}^k\!_j\widehat{\nabla}_i\widehat{\nabla}_k f + R_{0i0j}H \\
&\quad   -\nabla_iR_{j0}  -\nabla_jR_{i0} +\nabla_0R_{ij} - \mathcal{A}_{ij}R_{0 0}   - H \mathcal{A}^k\!_i \mathcal{A}_{jk}.
\end{align*}
Using Codazzi-Mainardi equation $R_{0jik}=\widehat{\nabla}_i \mathcal{A}_{jk} - \widehat{\nabla}_k \mathcal{A}_{ij}$ one has
\begin{align*}
\dfrac{\partial}{\partial t} \mathcal{A}_{ij} &= \widehat{\nabla}_i \widehat{\nabla}_j H - \Big(\widehat{\nabla}_k \mathcal{A}_{ij}\Big) \widehat{\nabla}^k f- \mathcal{A}^k\!_i \widehat{\nabla}_j\widehat{\nabla}_k f - \mathcal{A}^k\!_j\widehat{\nabla}_i\widehat{\nabla}_k f  + R_{0j0i}H \\
&\quad  -\nabla_iR_{j0}  -\nabla_jR_{i0} +\nabla_0R_{ij} - \mathcal{A}_{ij}R_{0 0}  - H \mathcal{A}^k\!_i \mathcal{A}_{jk}\\
&= \widehat{\nabla}_i \widehat{\nabla}_j H - \Big(\mathcal{L}_{\widehat{\nabla}f} \mathcal{A}\Big)_{ij}  -\nabla_iR_{j0}  -\nabla_jR_{i0} +\nabla_0R_{ij} - \mathcal{A}_{ij}R_{0 0}  + R_{0i0j}H \\
&\quad - H \mathcal{A}^k\!_i \mathcal{A}_{jk}.
\end{align*}
From Simons' identity~\eqref{Simons_identity} we get
\begin{align*}
\dfrac{\partial}{\partial t} \mathcal{A}_{ij}  &=(\widehat{\Delta} \mathcal{A})_{ij} - \Big(\mathcal{L}_{\widehat{\nabla}f} \mathcal{A}\Big)_{ij}  -(\nabla_iR_{j0} -\widehat{\nabla}_iR_{j0})  -(\nabla_jR_{i0} - \widehat{\nabla}_jR_{i0})  -2 \mathcal{A}_{ij}R_{0 0} \\
&\quad + \mathcal{A}^k\!_i R_{0k0j} + \mathcal{A}^k\!_j R_{0k0i} + 2 \mathcal{A}^{kl}R_{kilj}  - 2 H \mathcal{A}^k\!_i \mathcal{A}_{jk} + \mathcal{A}^{kl}\mathcal{A}_{kl}\mathcal{A}_{ij}\\
&\quad  + \nabla_0 R_{0i0j}.
\end{align*}
As $\nabla_i R_{j0} = \widehat{\nabla}_i R_{j0}  - \mathcal{A}_{ij}R_{00} + \mathcal{A}^k\!_{i}R_{jk}$ we conclude that
\begin{align*}
\dfrac{\partial}{\partial t} \mathcal{A}_{ij} &=(\widehat{\Delta} \mathcal{A})_{ij} - \Big(\mathcal{L}_{\widehat{\nabla}f} \mathcal{A}\Big)_{ij}  -\mathcal{A}^k\!_i R^l\!_{klj}  -\mathcal{A}^k\!_j R^l\!_{kli}  + 2 \mathcal{A}^{kl}R_{kilj} - 2 H \mathcal{A}^k\!_i \mathcal{A}_{jk} \\
&\quad + \mathcal{A}^{kl}\mathcal{A}_{kl}\mathcal{A}_{ij} + \nabla_0 R_{0i0j}.
\end{align*}
For finishing our proof, we show equation~\eqref{List_mean_evolution}. For it, note that
\begin{align*}
\delta H = -v_{ij}\mathcal{A}^{ij} + g^{ij}\delta \mathcal{A}_{ij}
\end{align*}
and
\begin{align*}
g^{ij}(\mathcal{L}_{\widehat{\nabla}f} \mathcal{A})_{ij} -2 \mathcal{A}^{ij}\widehat{\nabla}_i\widehat{\nabla}_j f = \widehat{\nabla}_{\widehat{\nabla} f} (g^{ij} \mathcal{A}_{ij})= \langle \widehat{\nabla} f, \widehat{\nabla} H\rangle.
\end{align*}
So,
\begin{align*}
\dfrac{\partial}{\partial t}H 	&=2(R_{ij} + \widehat{\nabla}_i\widehat{\nabla}_j f + H \mathcal{A}_{ij}) \mathcal{A}^{ij} + g^{ij} \Big((\widehat{\Delta} \mathcal{A})_{ij} - \big(\mathcal{L}_{\widehat{\nabla}f} \mathcal{A}\big)_{ij}  -\mathcal{A}^k_i R^l_{klj}   \\
&\quad -\mathcal{A}^k_j R^l_{kli}  + 2 \mathcal{A}^{kl}R_{kilj}  - 2 H \mathcal{A}^k_i \mathcal{A}_{jk} + \mathcal{A}^{kl}\mathcal{A}_{kl}\mathcal{A}_{ij} + \nabla_0 R_{0i0j}  \Big) \\
&\quad - 2\alpha_n \mathcal{A}(\widehat{\nabla} w, \widehat{\nabla} w )\\
&=2\mathcal{A}^{ij}R_{ij}  + 2H \mathcal{A}^{ij}\mathcal{A}_{ij}  + \widehat{\Delta} H     -\Big(g^{ij}\big(\mathcal{L}_{\widehat{\nabla}f} \mathcal{A}\big)_{ij} -2 \mathcal{A}^{ij}\widehat{\nabla}_i\widehat{\nabla}_j f\Big) - 2  \mathcal{A}^{kj} \mathcal{A}_{jk} H   \\
&\quad + \mathcal{A}^{kl}\mathcal{A}_{kl}H + \nabla_0 R_{00} - 2\alpha_n \mathcal{A}(\widehat{\nabla} w, \widehat{\nabla} w )\\
&= \widehat{\Delta}H - \langle \widehat{\nabla} f, \widehat{\nabla} H\rangle + 2 \mathcal{A}^{ij}R_{ij} + \mathcal{A}^{ij}\mathcal{A}_{ij}H + \nabla_0 R_{0 0} - 2\alpha_n \mathcal{A}(\widehat{\nabla} w, \widehat{\nabla}w).
\end{align*}
This finishes the proof.
\end{proof}

 As a consequence of Proposition~\ref{Gradient_formulate}, we have the following refinement of the formula obtained in Proposition~\ref{alm:mon}.  
\begin{corollary}\label{key_prop_monotonicity}
Let $M$ be an $n(\geqslant 3)$-dimensional compact smooth manifold with boundary $\partial M$, and let $I^{\alpha_n}_\infty$ be as in~\eqref{MGHYF}. 
 If $(g(t),w(t))\in{\rm met}(M)\times C^\infty (M),$ $t\in[0,T),$
is a Perelman’s modified extended Ricci flow,  then the following identity holds:
\begin{align*}
\dfrac{d}{d t}I_\infty^{\alpha_n} &=2 \int_{M} \Big(\left|\Ric + \nabla^2 f -\alpha_n dw \otimes dw\right|^2 + \alpha_n\big(\Delta w - \langle \nabla w, \nabla f \rangle\big)^2 \Big) e^{-f}dV \\
&\quad + 2 \int_{\partial M} \Big(\dfrac{\partial H}{\partial t}  - \langle \widehat{\nabla} f, \widehat{\nabla} H\rangle + \mathcal{A}(\widehat{\nabla} f, \widehat{\nabla} f) + 2 R^{0 i}\widehat{\nabla}_i f- \dfrac{1}{2} \nabla_0 R - HR_{00}\\
&\quad +  \alpha_n \mathcal{A} (\widehat{\nabla} w, \widehat{\nabla} w)\Big)e^{-f} d A.
\end{align*}
In particular, if both $\big(R_{ij} + \nabla_i \nabla_j f - \alpha_n \nabla_i w \nabla_j w\big)|_{\partial M}$ and $\big(R_{i0} + \nabla_i \nabla_0 f\big)|_{\partial M}$ vanish, then the boundary integrand vanishes.
\end{corollary}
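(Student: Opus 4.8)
The plan is to start from the expression for $\frac{d}{dt}I_\infty^{\alpha_n}$ already supplied by Theorem~\ref{alm:mon} and to recast its boundary integrand into the asserted form, using only the mean-curvature evolution equation~\eqref{List_mean_evolution} of Theorem~\ref{Gradient_formulate} together with the contracted second Bianchi identity. The interior integrand is literally identical in the two statements, so the whole task reduces to verifying a pointwise algebraic identity between the two boundary integrands along $\partial M$.

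First I would solve~\eqref{List_mean_evolution} for the Laplacian term, obtaining
\begin{align*}
\widehat{\Delta} H = \frac{\partial H}{\partial t} + \langle \widehat{\nabla} f, \widehat{\nabla} H\rangle - 2\mathcal{A}^{ij}R_{ij} - \mathcal{A}^{ij}\mathcal{A}_{ij}H - \nabla_0 R_{00} + 2\alpha_n \mathcal{A}(\widehat{\nabla} w, \widehat{\nabla} w),
\end{align*}
and substitute it into the boundary integrand of Theorem~\ref{alm:mon}. Collecting terms, the two copies of $\mathcal{A}^{ij}\mathcal{A}_{ij}H$ cancel, the coefficient of $\langle\widehat{\nabla}f,\widehat{\nabla}H\rangle$ drops from $-2$ to $-1$, a single $\alpha_n\mathcal{A}(\widehat{\nabla} w,\widehat{\nabla} w)$ survives, and the terms $\mathcal{A}(\widehat{\nabla} f,\widehat{\nabla} f)$ and $2R^{0i}\widehat{\nabla}_i f$ pass through unchanged. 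What then remains to be identified is precisely the curvature combination
\begin{align*}
-\mathcal{A}^{ij}R_{ij} - \nabla_0 R_{00} - \widehat{\nabla}_i R^{0i},
\end{align*}
which must be shown to equal $-\tfrac12\nabla_0 R - HR_{00}$ in order to match the stated boundary integrand exactly.

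The heart of the argument, and the step I expect to demand the most care, is this last identity. I would begin from the contracted second Bianchi identity $g^{\alpha\gamma}\nabla_\gamma R_{\alpha 0}=\tfrac12\nabla_0 R$ and split the contraction into normal and tangential parts. Because the coordinates are chosen so that $\partial_0|_{\partial M}=e_0$ is the inward-pointing unit normal, one has $g^{00}=1$ and $g^{0i}=0$ on $\partial M$, so that $g^{\alpha\gamma}\nabla_\gamma R_{\alpha 0}=\nabla_0 R_{00}+g^{ij}\nabla_j R_{i0}$. I would then convert the full tangential derivative into a boundarywise one via the relation $\nabla_i R_{j0}=\widehat{\nabla}_i R_{j0}-\mathcal{A}_{ij}R_{00}+\mathcal{A}^k{}_i R_{jk}$ already recorded above Theorem~\ref{Gradient_formulate}; tracing over $i,j$ gives $g^{ij}\nabla_j R_{i0}=\widehat{\nabla}_i R^{0i}-HR_{00}+\mathcal{A}^{ij}R_{ij}$. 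Combining the two displays yields $\tfrac12\nabla_0 R=\nabla_0 R_{00}+\widehat{\nabla}_i R^{0i}-HR_{00}+\mathcal{A}^{ij}R_{ij}$, which rearranges to exactly the needed identity. Substituting this back completes the match of the boundary integrands and proves the formula; the final assertion that the boundary integrand vanishes when both $(R_{ij}+\nabla_i\nabla_j f-\alpha_n\nabla_i w\nabla_j w)|_{\partial M}$ and $(R_{i0}+\nabla_i\nabla_0 f)|_{\partial M}$ vanish is then inherited directly from Theorem~\ref{alm:mon}, since the refinement only regroups the same boundary terms.
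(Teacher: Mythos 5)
Your proposal is correct and follows essentially the same route as the paper: starting from Theorem~\ref{alm:mon}, substituting the mean-curvature evolution equation~\eqref{List_mean_evolution}, and closing the computation with the contracted second Bianchi identity combined with the relation $\nabla_i R_{j0}=\widehat{\nabla}_i R_{j0}-\mathcal{A}_{ij}R_{00}+\mathcal{A}^k{}_i R_{jk}$ to obtain $\tfrac12\nabla_0 R=\nabla_0 R_{00}+\widehat{\nabla}_i R^{0i}-HR_{00}+\mathcal{A}^{ij}R_{ij}$. Your bookkeeping of the cancellations (including the single surviving $\alpha_n\mathcal{A}(\widehat{\nabla}w,\widehat{\nabla}w)$ term) matches the paper's argument, and the final vanishing claim is inherited from Theorem~\ref{alm:mon} exactly as in the paper.
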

\begin{proof}
From equation~\eqref{List_mean_evolution} of Proposition~\ref{Gradient_formulate}, the boundary integrand term of Proposition~\ref{alm:mon} can be rewritten as
\begin{align*}
&\widehat{\Delta} H - 2 \langle \widehat{\nabla} f,  \widehat{\nabla} H \rangle + \mathcal{A}( \widehat{\nabla} f,  \widehat{\nabla} f) + \mathcal{A}^{ij} \mathcal{A}_{ij} H + \mathcal{A}^{ij}R_{ij} + 2 R^{0 i}  \widehat{\nabla}_i f -  \widehat{\nabla}_i R^{0i}\\
&\quad - \alpha_n \mathcal{A}(\widehat{\nabla} w, \widehat{\nabla} w )\\
& = \dfrac{\partial H}{\partial t} -  \langle \widehat{\nabla} f,  \widehat{\nabla} H \rangle + \mathcal{A}( \widehat{\nabla} f,  \widehat{\nabla} f)  - \mathcal{A}^{ij}R_{ij} + 2 R^{0 i}  \widehat{\nabla}_i f -  \widehat{\nabla}_i R^{0i}  - \nabla_0 R_{00} \\
&\quad  + \alpha_n \mathcal{A}(\widehat{\nabla} w, \widehat{\nabla} w ).
\end{align*}
Contracted Bianchi Identity and the fact that $\nabla_i R_{j0} = \widehat{\nabla}_i R_{j0} - \mathcal{A}_{ij}R_{00} + \mathcal{A}^k\!_{i}R_{jk}$ imply
\begin{align*}
\dfrac{1}{2} \nabla_0 R = \nabla_i R^{i0} + \nabla_0 R_{00} = \widehat{\nabla}_i R^{i0}  - HR_{00} + \mathcal{A}^{ij}R_{ij} + \nabla_0 R_{00}.
\end{align*}
The main result of the corollary follows from these two latter equations. If, in addition, both $R_{ij} + \nabla_i \nabla_j f - \alpha_n \nabla_i w \nabla_j w$ and $R_{i0} + \nabla_i \nabla_0 f$ vanish on $\partial M$, then by Proposition~\ref{alm:mon} the integrand of $\partial M$, namely
 \begin{align*}
\dfrac{\partial H}{\partial t}  - \langle \widehat{\nabla} f, \widehat{\nabla} H\rangle + \mathcal{A}(\widehat{\nabla} f, \widehat{\nabla} f) + 2 R^{0 i}\widehat{\nabla}_i f- \dfrac{1}{2} \nabla_0 R - HR_{00} +  \alpha_n \mathcal{A} (\widehat{\nabla} w, \widehat{\nabla} w)
\end{align*}
vanishes.
\end{proof}

\section{Hypersurfaces in an extended Ricci flow background} \label{se: Ricci}

We give in this section the proofs of our main results. To prove Theorem~\ref{principal_theorem}, we shall need the following

\begin{proposition}\label{mean_curv_flow_in_a_mod}
Let $M$ be an $n(\geqslant 3)$-dimensional smooth manifold. Suppose $\mathscr F:=\{\Sigma_t\,;\, t\in[0,T)\}$ is a mean curvature flow in the $(g(t),w(t))$-extended Ricci flow background on $M$ which satisfies $e_0w=0$ on $\Sigma_0,$ where $e_0$ is the  unit normal vector field on $\Sigma_0.$ Then, the following evolution equations hold: 
\begin{align}
\frac{\partial}{\partial t}g_{ij} &= - 2(R_{ij}  - \alpha_n \widehat{\nabla}_i w \widehat{\nabla}_j w)-2H\mathcal{A}_{ij},\label{List:together:mean:curv}\\
\frac{\partial}{\partial t}w &=  \widehat{\Delta} w + \nabla_0\nabla_0 w,\label{List:secondEq}\\
\frac{\partial}{\partial t}\mathcal{A}_{ij} &=
(\widehat{\Delta} \mathcal{A})_{ij}-\mathcal{A}^k\!_i R^l_{klj}-\mathcal{A}^k\!_j R^l_{kli}+2\mathcal{A}^{kl} R_{kilj}
-2H  \mathcal{A}_{ik}\mathcal{A}^k\!_j  \label{sec_fund_variational}\\
&\quad + \mathcal{A}^{kl}\mathcal{A}_{kl}\mathcal{A}_{ij} + \nabla_0 R_{0 i 0 j} \nonumber
\end{align}
and
\begin{align}\label{mean_curv_background}
\dfrac{\partial}{\partial t}H = \widehat{\Delta}H + 2 \mathcal{A}^{ij}R_{ij} + \mathcal{A}^{ij}\mathcal{A}_{ij}H
+ \nabla_0 R_{0 0} -2\alpha_n \mathcal{A}(\widehat{\nabla}w, \widehat{\nabla}w).
\end{align}
\end{proposition}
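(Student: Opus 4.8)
The plan is to derive Proposition~\ref{mean_curv_flow_in_a_mod} as a specialization of Theorem~\ref{Gradient_formulate}, exploiting the fact that the mean curvature flow in an extended Ricci flow background is related to Perelman's modified extended Ricci flow by a time-dependent diffeomorphism generated by $\widehat{\nabla} f$. The key observation, following the strategy of Lott, is that the geometric evolution equations for a hypersurface moving by mean curvature in the background flow $\eqref{ListEq}$ differ from the boundary evolution equations of Theorem~\ref{Gradient_formulate} precisely by the Lie-derivative terms $\mathcal{L}_{\widehat{\nabla} f}$. These diffeomorphism terms act tangentially to $\partial M$ and correspond to reparametrizing the hypersurface; since the intrinsic geometric quantities $g_{ij}, w, \mathcal{A}_{ij}, H$ are invariant under such tangential reparametrizations (modulo the motion they induce), the modified extended Ricci flow on a fixed domain $X_a$ with evolving boundary condition $H + e_0 f = 0$ becomes, after pulling back by $\psi_t$, precisely the mean curvature flow in the plain extended Ricci flow background.

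First I would make the correspondence explicit by setting $f$ so that the normal-speed condition $H + e_0 f = 0$ encodes the mean curvature motion, and verify that the spatial part of the modified flow $\eqref{grad:form:eq1}$, $\eqref{grad:form:eq2}$ differs from the plain extended Ricci flow $\eqref{ListEq}$ only by the Lie derivative along $\nabla f$. Concretely, each of the four equations $\eqref{flow_with_mean_curv}$--$\eqref{List_mean_evolution}$ in Theorem~\ref{Gradient_formulate} contains exactly one explicit $\mathcal{L}_{\widehat{\nabla} f}$ contribution (or its contracted form $\langle \widehat{\nabla} f, \widehat{\nabla} H\rangle$ in the mean-curvature equation). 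Removing these tangential-diffeomorphism terms yields the target equations $\eqref{List:together:mean:curv}$--$\eqref{mean_curv_background}$. For $\eqref{flow_with_mean_curv}\to\eqref{List:together:mean:curv}$ and $\eqref{List_heat_equation}\to\eqref{List:secondEq}$ this is immediate; for the second fundamental form $\eqref{sec_fund_evolution}\to\eqref{sec_fund_variational}$ one drops the $(\mathcal{L}_{\widehat{\nabla} f}\mathcal{A})_{ij}$ term; and for $\eqref{List_mean_evolution}\to\eqref{mean_curv_background}$ one drops the $\langle \widehat{\nabla} f, \widehat{\nabla} H\rangle$ term, which is exactly the trace of the Lie-derivative contribution as recorded by the identity $g^{ij}(\mathcal{L}_{\widehat{\nabla}f}\mathcal{A})_{ij} - 2\mathcal{A}^{ij}\widehat{\nabla}_i\widehat{\nabla}_j f = \langle \widehat{\nabla} f, \widehat{\nabla} H\rangle$ used in the proof of Theorem~\ref{Gradient_formulate}.

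The cleanest rigorous route is to run the variational computation of Theorem~\ref{Gradient_formulate} again but with the variation of the metric taken directly from the background flow $\eqref{ListEq}$ rather than from the modified flow $\eqref{grad:form:eq1}$. Thus I would set $v_{\alpha\beta} = -2(R_{\alpha\beta} - \alpha_n \nabla_\alpha w\nabla_\beta w)$ together with the normal-speed term encoding $\frac{\partial}{\partial t}x = He_0$, then substitute into the same general first-variation formulas for $\delta\mathcal{A}_{ij}$ and $\delta H$ that appear in Theorem~\ref{Gradient_formulate}. Because the only difference from the modified flow is the absence of the $\nabla^2 f$ piece in the ambient metric variation, the entire Simons'-identity reduction $\eqref{Simons_identity}$ carries through verbatim, with the Hessian-of-$f$ terms replaced by the mean-curvature motion contribution $-2H\mathcal{A}_{ij}$ and its consequences. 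I expect the main obstacle to be bookkeeping: one must carefully track which $\nabla^2 f$ terms in the derivation of Theorem~\ref{Gradient_formulate} originate from the ambient flow (and hence are absent here) versus which originate from the boundary condition $H + e_0 f = 0$ (and hence reappear as genuine mean-curvature terms such as $-2H\mathcal{A}_{ik}\mathcal{A}^k_j$). Getting this separation right — in particular confirming that exactly the tangential $\mathcal{L}_{\widehat{\nabla} f}$ terms and no others disappear — is the crux, and it is the reason the authors remark that the proof proceeds by the same approach as Theorem~\ref{Gradient_formulate}.
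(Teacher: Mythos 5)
Your overall strategy --- relate the mean curvature flow in an extended Ricci flow background to Perelman's modified extended Ricci flow on a fixed domain via a time-dependent diffeomorphism, invoke Theorem~\ref{Gradient_formulate}, and observe that the two sets of equations differ exactly by the tangential Lie-derivative terms --- is the paper's strategy, and your identification of which terms drop out of \eqref{flow_with_mean_curv}--\eqref{List_mean_evolution} is correct. But there is a genuine gap in how you set up the correspondence. The diffeomorphisms must be generated by the full ambient gradient $-\nabla_{g(t)} f(t)$, not by $\widehat{\nabla} f$: the tangential part merely reparametrizes $\Sigma_t$, while the normal part $-(e_0 f)e_0 = He_0$ is what realizes the mean curvature motion and guarantees $\phi_t(X_a) = X_t$. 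For this to work you need a globally defined smooth $f$ on $\bigcup_t (X_t\times\{t\})$ with $e_0 f = -H$ on the \emph{moving} boundary and such that the pullbacks $(\widetilde{g},\widetilde{w},\widetilde{f})$ satisfy the full modified flow, including \eqref{weighted_pr_meas}, since that is the hypothesis of Theorem~\ref{Gradient_formulate}. The paper produces such an $f$ by solving the conjugate heat equation \eqref{back:heat:eq} backwards in time from the final time with the Robin boundary condition $e_0 u = Hu$, $u = e^{-f}$. ``Setting $f$ so that $H + e_0 f = 0$'' prescribes only a boundary datum and does not by itself yield this $f$; that construction is the actual content of the proof and is missing from your proposal.

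Your fallback route --- rerunning the variational computation with $v_{\alpha\beta} = -2(R_{\alpha\beta}-\alpha_n\nabla_\alpha w\nabla_\beta w)$ ``together with the normal-speed term'' --- has a second problem: the formula $\delta\mathcal{A}_{ij} = \tfrac12(\nabla_i v_{j0}+\nabla_j v_{i0}-\nabla_0 v_{ij}) + \tfrac12 v_{00}\mathcal{A}_{ij}$ used in Theorem~\ref{Gradient_formulate} is the first variation of the second fundamental form of a \emph{fixed} hypersurface under a variation of the ambient metric. The contribution of the normal motion $\partial_t x = He_0$ to $\partial_t\mathcal{A}_{ij}$ is not obtained by inserting an extra piece of $v_{\alpha\beta}$; it is a separate computation (yielding $\widehat{\nabla}_i\widehat{\nabla}_j H - H\mathcal{A}_{ik}\mathcal{A}^k\!_j + HR_{0i0j}$, as in Huisken) that must then be combined with Simons' identity \eqref{Simons_identity}. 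You would in effect be redoing the derivations of \cite{Huisken} and \cite{Magni_Mantegazza_Tsatis} from scratch; this can be done, but it is not the clean shortcut you describe, and the bookkeeping you yourself flag as the crux is exactly what the conjugate-heat-equation and diffeomorphism device is designed to eliminate.
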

\begin{proof}
In this proof, we follow ~\cite[Proposition~4]{John_Lott} closely. First, assume $\Sigma_t=\partial X_t$ with each $X_t$ compact. Given a time interval $[a, b]$, we can find a positive solution $u=
e^{-f}$ on $\bigcup_{t\in [a,b]}(X_t \times \{t\}) \subset M\times [a, b]$ of the conjugate heat equation
 \begin{align}\label{assump_principal_thm}
\frac{\partial}{\partial t}u = - \Delta u + R u - \alpha_n |\nabla w|^2u
\end{align}
satisfying the boundary condition $e_0 u = Hu$ on $\Sigma_0=\partial X_a$, by solving it backwards
in time from $t = b$. (Choosing diffeomorphisms from $\{ X_t \}$ to $X_a$, we can reduce the problem
of solving~\eqref{assump_principal_thm} to a parabolic equation on a fixed domain with $e_0 w = 0$ on $\Sigma_0).$

Now, let $\{\phi_t\}_{t \in [a, b]}$ be the one-parameter family of
diffeomorphisms generated by $\{-\nabla_{g(t)} f(t)\}_{t \in [a, b]}$,
with $\phi_a = \Id$. Then $\phi_t(X_a) = X_t$ for all $t.$
By setting $\widetilde{g}(t) =\phi_t^*g(t)$, $\widetilde{w}(t) =\phi_t^*w(t)$
and $\widetilde{f}(t) =\phi_t^*f(t)$
we have that $\widetilde{g}(t)$, $\widetilde{w}(t)$ and $\widetilde{f}(t)$ are defined on $X_a$.
We claim that
\begin{align*}
\left\{
\begin{array}{lcl}
\frac{\partial }{\partial t} \widetilde{g}_{\alpha \beta}=
-2 (\widetilde{R}_{\alpha\beta} +  \widetilde{\nabla}_\alpha\widetilde{\nabla}_\beta \widetilde{f}
- \alpha_n \widetilde{\nabla}_\alpha \widetilde{w}  \widetilde{\nabla}_\beta \widetilde{w}),\\
\frac{\partial }{\partial t} \widetilde{w} = \Delta_{\widetilde{g}} \widetilde{w}
- \langle \widetilde{\nabla} \widetilde{w} , \widetilde{\nabla}\widetilde{f} \rangle_{\widetilde{g}}
\end{array}
\right.
\end{align*}
and
\begin{align*}
\frac{\partial}{\partial t}\widetilde{f} =
- \Delta_{\widetilde{g}} \widetilde{f} - R_{\widetilde{g}}  + \alpha_n |\widetilde{\nabla} \widetilde{w}|_{\widetilde{g}}^2
\end{align*}
in $ X_a \times [a,b]$ with $e_0f + H = 0$ and $e_0 w = 0$ on $ \partial X_a= \Sigma_0.$
Indeed,
\begin{align*}
\frac{\partial }{\partial t}\widetilde{g}_{\alpha \beta} &= \phi^*_t \Big(\frac{\partial }{\partial t} g_{\alpha\beta}\Big) + \phi^*_t\Big(\mathcal{L}_{\frac{d}{dt}\phi_t}g\Big)_{\alpha\beta}\\
&=\phi^*_t \Big( -2(R_{\alpha\beta}  - \alpha_n \nabla_\alpha w \nabla_\beta w)\Big) - \phi^*_t\Big(\mathcal{L}_{\big(\nabla_{g(t)} f(t)\big)}g\Big)_{\alpha\beta}\\
&= -2 (\widetilde{R}_{\alpha\beta} +  \widetilde{\nabla}_\alpha\widetilde{\nabla}_\beta \widetilde{f} - \alpha_n \widetilde{\nabla}_\alpha \widetilde{w}  \widetilde{\nabla}_\beta \widetilde{w}).
\end{align*}
For the second item, we have
\begin{align*}
\frac{\partial }{\partial t}\widetilde{w} &= \phi^*_t \Big(\frac{\partial }{\partial t} w\Big) + \phi^*_t\mathcal{L}_{\frac{d}{dt} \phi_t}w \\
&= \phi^*_t \Big(\Delta w\Big) - \phi^*_t\mathcal{L}_{\big(\nabla_{g(t)} f(t)\big)} w \\
&=\Delta_{\widetilde{g}} \widetilde{w}  - \langle \widetilde{\nabla} \widetilde{w} , \widetilde{\nabla}\widetilde{f} \rangle_{\widetilde{g}}.
\end{align*}
Now, we use that $\Delta u = (|\nabla f|^2 - \Delta f) e^{-f}$ and~\eqref{assump_principal_thm} to obtain
\begin{align*}
\frac{\partial }{\partial t}\widetilde{f} &= \phi^*_t \Big(\frac{\partial }{\partial t} f\Big) + \phi^*_t\mathcal{L}_{\frac{d}{dt} \phi_t}f \\
&= \phi^*_t \Big( |\nabla f|^2 - \Delta f - R  + \alpha_n |\nabla w|^2\Big) - \phi^*_t\mathcal{L}_{\big(\nabla_{g(t)} f(t)\big)} f \\
&=  - \Delta_{\widetilde{g}} \widetilde{f} - R_{\widetilde{g}}  + \alpha_n |\widetilde{\nabla} \widetilde{w}|_{\widetilde{g}}^2 .
\end{align*}
The boundary conditions follow from the fact that $e_0u=Hu$ and $e_0 w = 0$ on $\Sigma_0.$  Thus, $(\widetilde{g}(t), \widetilde{w}(t))$
evolves by Perelman’s modified extended Ricci flow in $X_a \times [a,b]$, and then we are in a position to use Proposition~\ref{Gradient_formulate}
for the compact manifold $X_a$ with boundary $\partial X_a$. So, from equation~\eqref{flow_with_mean_curv} we have on $\Sigma_t$
\begin{align*}
\frac{\partial }{\partial t} g_{ij}  &=  \frac{\partial }{\partial t} \Big((\phi^*_t)^{-1} \phi^*_t g_{ij}\Big) = \frac{\partial }{\partial t}\Big((\phi^*_t)^{-1} \widetilde{g}_{ij}\Big)
=(\phi^*_t)^{-1} \Big(\frac{\partial }{\partial t} \widetilde{g}_{ij}  + \Big(\mathcal{L}_{\frac{d}{dt}\phi_t^{-1}}\widetilde{g}\Big)_{ij} \Big)\\
&= - 2(R_{ij}  - \alpha_n \widehat{\nabla}_i w \widehat{\nabla}_j w) - 2 H \mathcal{A}_{ij},
\end{align*}
 which is~\eqref{List:together:mean:curv}. Likewise, from equation~\eqref{List_heat_equation} one has
\begin{align*}
\frac{\partial }{\partial t} w &= (\phi^*_t)^{-1} \Big( \frac{\partial }{\partial t} \widetilde{w} +\mathcal{L}_{\frac{d}{dt}\phi_t^{-1}}\widetilde{w} \Big)
=\widehat{\Delta} w + \nabla_0 \nabla_0 w,
\end{align*}
which is~\eqref{List:secondEq}. Next, equation~\eqref{sec_fund_evolution} implies
\begin{align*}
\frac{\partial }{\partial t}\mathcal{A}_{ij} &=
(\phi^*_t)^{-1}\Big(\frac{\partial}{\partial t}\widetilde{\mathcal{A}}_{ij}+ \Big(\mathcal{L}_{\frac{d}{dt}\phi_t^{-1}}\widetilde{\mathcal{A}}\Big)_{ij}\Big) \\
&= (\widehat{\Delta} \mathcal{A})_{ij}  - \mathcal{A}^k\!_i R^l_{klj} - \mathcal{A}^k\!_j R^l_{kli}
+2 \mathcal{A}^{kl} R_{kilj} -2H  \mathcal{A}_{ik}\mathcal{A}^k\!_j+\mathcal{A}^{kl}\mathcal{A}_{kl}\mathcal{A}_{ij}\\
&\quad+ \nabla_0 R_{0 i 0 j}.
\end{align*}
For finishing, from equation~\eqref{List_mean_evolution} we get
\begin{align*}
\frac{\partial }{\partial t} H &= (\phi^*_t)^{-1} \Big(\frac{\partial }{\partial t}H_{\widetilde{g}} + \mathcal{L}_{\frac{d}{dt} \phi_t^{-1}}H_{\widetilde{g}} \Big) \\
&= \widehat{\Delta}H + 2 \mathcal{A}^{ij}R_{ij} + \mathcal{A}^{ij}\mathcal{A}_{ij}H + \nabla_0 R_{0 0} -2\alpha_n \mathcal{A}(\widehat{\nabla}w, \widehat{\nabla}w).
\end{align*}
This finishes the proof.
\end{proof}

\begin{remark}\label{without}
We  point out that equations~\eqref{List:together:mean:curv} and~\eqref{List:secondEq} hold  regardless the assumption $e_0 w=0$ on $\Sigma_0$.
\end{remark}

\begin{remark} \label{rem: recover2}
If $M$ is the Euclidean space with its standard metric $g_0$, $g(t) = g_0$ and $w(t) = w$ is a constant,
then Eqs.~\eqref{List:together:mean:curv}, \eqref{sec_fund_variational} and \eqref{mean_curv_background}
are the same as in~\cite[Lem.~3.2, Thm.~3.4 and Cor.~3.5]{Huisken}, see also Mantegazza~\cite[Sect.~2.3]{Mantegazza}.
\end{remark}

\begin{proof}[\bf Proof of Theorem~\ref{principal_theorem}]
The hypotheses on $\{{\partial M}_t\,;\,t\in[0,T)\}$ and on $u$ allow us to use $\widetilde{g}(t)$, $\widetilde{w}(t)$ and $\widetilde{f}(t)$ on $M$ as in the proof of Proposition~\ref{mean_curv_flow_in_a_mod}.  In this way, the result follows immediately from Corollary~\ref{key_prop_monotonicity} and the fact that the identity 
$$\frac{\partial}{\partial t} H_{\widetilde{g}}=
\dfrac{\partial}{\partial t}H_g-\langle \widehat{\nabla} f,\widehat{\nabla} H\rangle$$ 
holds on $\partial M_t$ for all $t\in[0,T).$ 
\end{proof}

\begin{remark} \label{rem: recover3}
 As we pointed out in the introduction, our Theorem~\ref{principal_theorem} extends Theorem~1 in~\cite{John_Lott}. Also, when $M$ is compact without boundary, it coincides with~\cite[Lem.~3.4]{Bernhard_List}.
\end{remark}

\section{Extension of Hamilton's differential Harnack expression}

Here, we will see as the boundary integrand term of the time-derivative of weighted extended GHY-action provides an extension of Hamilton’s differential Harnack expression for mean curvature flow in Euclidean space to the more general context of mean curvature flow in an extended Ricci flow background.

 Let $\mathscr F:=\{\Sigma_t\}$ be a family of mean curvature solitons in the $(\overline{g},\overline{w})$-extended Ricci flow background.
Then, the equations for the steady case 
$$\overline{R}_{ij} + \overline{\nabla}_i \overline{\nabla}_j\overline{f} - \alpha_n \overline{\nabla}_i \overline{w} \overline{\nabla}_j \overline{w} 
= 0\quad \text{and}\quad  \overline{R}_{i0} + \overline{\nabla}_i \overline{\nabla}_0\overline{f} - 
\alpha_n \overline{\nabla}_i \overline{w} \overline{\nabla}_0 \overline{w} = 0$$  on $\Sigma_t$ become
\begin{align}\label{soliton_restricted}
\overline{R}_{ij} + \widehat{\nabla}_i \widehat{\nabla}_j\overline{f} 
+ H_{\overline{g}} \mathcal{A}_{ij}- \alpha_n\widehat{\nabla}_i \overline{w}\widehat{\nabla}_j\overline{w}=0,
\end{align}
and
\begin{align}\label{soliton_restricted2}
\overline{R}_{i0} - \widehat{\nabla}_i H_{\overline{g}} + \mathcal{A}^k\!_i \widehat{\nabla}_k \overline{f} -\alpha_n e_0 \overline{w}\widehat{\nabla}_i \overline{w}= 0 .
\end{align}

\begin{example}
For instance, consider $M = \mathbb{R}^n$, $\overline{g}(t)= \delta_{\alpha \beta}$ and $\overline{w}(t) = w$ constant, and let $L$ be a linear function on $\mathbb{R}^n$.  Defining $\overline{f} = L + t |\nabla L|^2$, we have that $\overline f$ satisfies~\eqref{self-solution}. Changing $\overline{f}$ to $-f$, equations~\eqref{soliton_restricted} and~\eqref{soliton_restricted2}  then become
\begin{align*}
\widehat{\nabla}_i \widehat{\nabla}_j f-H \mathcal{A}_{ij}=0 \quad \hbox{and}\quad \widehat{\nabla}_i H + \mathcal{A}^k\!_i \widehat{\nabla}_k f = 0,
\end{align*}
respectively, which appear in~\cite[p.~219]{Hamilton} as equations for a translating soliton.
\end{example}

Consider a bounded domain $\Omega$  with smooth boundary $\partial\Omega:=\Sigma$ in Euclidean space $\mathbb{R}^n$, and   take a solution $u=e^{-f}$  to the  conjugate heat
equation~\eqref{back:heat:eq} in $\Omega\times[0,T)$
with $e_0 u=Hu$ on $\Sigma$. If $\mathscr{F}:=\{\Sigma_t\,;\, t\in[0,T)\}$ is a
mean curvature flow in a $(g(t),w(t))$-extended Ricci flow background
with $g(t)$ Ricci flat and $e_0 w = 0$ on $\Sigma$, then the boundary
integrand in Theorem~\ref{principal_theorem} becomes
\begin{align}\label{type:Harnack}
&\mathcal{Z}(V) + \alpha_n \mathcal{A}(\widehat{\nabla} w, \widehat{\nabla} w),
\end{align}
where $V=-\widehat{\nabla} f$ and $\mathcal{Z}(V):=\frac{\partial H}{\partial t}
+ 2\langle V, \widehat{\nabla} H\rangle + \mathcal{A}(V, V)$
is Hamilton’s differential Harnack expression for the case of mean
curvature flow in Euclidean space, which vanishes in the particular case
$\mathscr{F}$ is a translating soliton (cf. ~\cite[Def.~4.1 and Lem.~3.2]{Hamilton}).

The next result suggests an extension $\mathcal{Z}^{\alpha_n}_{\overline{g},\overline{w}}$ of $\mathcal{Z}$ for the more general case of mean curvature flow in an extended Ricci flow background, whose characterization of nullity should be on the steady case. For this, we observe that, if $(\overline{g}(t), \overline{w}(t))$ is a gradient steady soliton on a smooth manifold $M$ with potential function $\overline{f}$, and $\Sigma$ is a mean curvature soliton at $t=0$, then its ensuing mean curvature flow $\{\Sigma_t \}$ consists of mean curvature solitons, and $\{\Sigma_t \}$ differs from $\{\psi_t(\Sigma)\}$ by hypersurface diffeomorphisms. In Section~\ref{sect:grad:sol}, we give a more general description that includes the shrinking and expanding soliton cases.

\begin{corollary}\label{type-Harnack-background}
Let $M$ be an $n(\geqslant 3)$-dimensional smooth manifold and $(\overline{g}(t), \overline{w}(t))$ a gradient steady soliton on $M\times[0,T)$ with potential function $\overline{f}.$ Assume that  $\mathscr F:=\{\Sigma_t\,;\, t\in[0,T)\}$ is a mean curvature flow in the $(\overline g,\overline w)$-extended Ricci flow background which satisfies $H + e_0 f = 0$ and $e_0w=0$ on $\Sigma_0,$ where $e_0$ is the  unit normal vector field on $\Sigma_0.$ Under these conditions, the  identity
\begin{align*}
\mathcal{Z}(-\widehat{\nabla}_{\overline{g}}\overline{f})+
 2 \overline{R}^{0 i}\widehat{\nabla}_i \overline{f}- \dfrac{1}{2} \overline{\nabla}_0 \overline{R}
 - H_{\overline{g}}\overline{R}_{00}+  
 \alpha_n \mathcal{A}(\widehat{\nabla}_{\overline{g}} \overline{w}, \widehat{\nabla}_{\overline{g}} \overline{w}) = 0
\end{align*}
holds for all $t\in[0,T)$,  where $\mathcal{A}$ and $\widehat{\nabla}_{\overline{g}}$ are as in Theorem \ref{principal_theorem}.
\end{corollary}

\begin{proof}
If $(\overline{g}(t), \overline{w}(t))$ is a gradient steady soliton on $M \times [0,T)$, then the positive function $u = e^{-\overline{f}(t)}$ on $\bigcup_{t\in [0,T)}(X_t \times \{t\}) \subset M\times [0, T)$ satisfies the conjugated heat equation~\eqref{assump_principal_thm} with $e_0 u = Hu$ and $e_0 w = 0$ on $\partial X_0=\Sigma_0$, where the boundary conditions follows from the assumptions on $\Sigma_0$. To see this, first observe that $\Delta_{\overline{g}} u = (|\nabla_{\overline{g}} \overline{f}|_{\overline{g}}^2 - \Delta_{\overline{g}} \overline{f}) u$. Now taking traces in the first equation of~\eqref{mod_grad_Ricci_soliton} and using~\eqref{self-solution}, we obtain
 \begin{align*}
\frac{\partial}{\partial t} u = -u |\nabla_{\overline{g}} \overline{f}|_{\overline{g}}^2 = -\Delta_{\overline{g}} u+ R_{\overline{g}} u - \alpha_n |\nabla_{\overline{g}} \overline{w}|_{\overline{g}}^2u.
 \end{align*}
Thus, we can define $\widetilde{g}(t)$, $\widetilde{w}(t)$ and $\widetilde{f}(t)$ on $X_0$ as in the proof of Proposition~\ref{mean_curv_flow_in_a_mod}, so that $(\widetilde{g}(t), \widetilde{w}(t))$ evolves by Perelman’s modified extended Ricci flow on $X_0 \times [0,T)$. Besides, again we use that $(\overline{g}(t), \overline{w}(t))$ is a gradient steady soliton and that $e_0 w = 0$ on $\Sigma_0$, to get
\begin{align*}
 \big(\widetilde{R}_{ij} + \widetilde{\nabla}_i \widetilde{\nabla}_j \widetilde{f} - \alpha_n \widetilde{\nabla}_i \widetilde{w} \widetilde{\nabla}_j \widetilde{w}\big)|_{\Sigma_0}=0 \quad\hbox{and}\quad  \big(\widetilde{R}_{i0} + \widetilde{\nabla}_i \widetilde{\nabla}_0 \widetilde{f}\big)|_{\Sigma_0}=0.
\end{align*}
As in the proof of Theorem~\ref{principal_theorem}, the result of the corollary follows from Corollary~\ref{key_prop_monotonicity} 
and the identity 
$$\dfrac{\partial}{\partial t} H_{\widetilde{g}} = 
\dfrac{\partial}{\partial t} H_{\overline{g}} -
\langle \widehat{\nabla}_{\overline{g}} \overline{f}, \widehat{\nabla}_{\overline{g}} H_{\overline{g}}\rangle_{\overline{g}}.$$
This completes the proof.
\end{proof}

\begin{remark} \label{rem: recover4}
Suppose $M = \mathbb{R}^n$, $\overline{g}_{\alpha\beta}(t) = \delta_{\alpha\beta}$ and $\overline{w}(t) = w$ constant. 
Let $L$ be a linear function on $\mathbb{R}^n$ and define 
$\overline{f} = L + t |\nabla L|^2$. Letting $V(t) = 
-\widehat{\nabla} \overline{f}$, Corollary~\ref{type-Harnack-background}  coincides with~\cite[Lem.~3.2]{Hamilton}.
\end{remark}

\section{Characterization of mean curvature solitons}\label{sect:grad:sol}
Special solutions of extended Ricci flow come from gradient solitons. Here we describe such solutions on a background geometry, we follow as in Ph.D. thesis by List~\cite[Sect.~2.2]{Bernhard_List_Thesis} in the line of Lott and Kleiner~\cite[Appx.~C]{Kleiner_Lott}. We start recalling some aspects of the theory of extended Ricci solitons.

A gradient soliton to the extended Ricci flow is, by definition, a self-similar solution $(\overline g(t),\overline w(t))$ of~\eqref{ListEq} given by
\begin{subequations}
\begin{empheq}[left=\empheqlbrace]{align}
\overline g(t) &= \sigma(t)\psi_{t}^*g,\label{self-similiar:solution1}\\
\overline w(t) &= \psi_t^* w\label{self-similiar:solution2},
\end{empheq}
\end{subequations}
for some initial value $(g,w)$, where $\psi_t$ is a  smooth one-parameter  family of diffeomorphisms of $M$ generated from the flow of $\nabla_g f/\sigma(t)$ computed on $g$, for some $f \in C^\infty(M)$, and $\sigma$  is a smooth positive function on $t.$ Gradient solitons to the extended Ricci flow are obtained as follows.

\begin{proposition}\label{self:similar:solution:alt}
Let $M$ be an $n(\geqslant 3)$-dimensional smooth manifold. Suppose there exists a triple $(g, w, f)$ satisfying
\begin{subequations}
\begin{empheq}[left=\empheqlbrace]{align}
&\Ric_{g} + \nabla^2_{g}f - \alpha_n d w  \otimes d w = \lambda g,\label{self-similar:sol1:alt}\\
&\Delta_{g} w = \langle \nabla_{g} f, \nabla_{g} w\rangle_{g}\label{self-similar:sol2:alt}
\end{empheq}
\end{subequations}
for some $\lambda \in \mathbb R$ and $\alpha_n = (n - 1)/(n - 2).$ Take $\psi_t$ the one-parameter family of diffeomorphisms generated by $Y_t = \frac{\nabla_g f}{\sigma(t)}$, with $\psi_{0} = \Id$ and $\sigma(t) = 1 -2\lambda t > 0,$ where $t\in (-\infty,\frac{1}{2\lambda})$, for $\lambda>0$; $t\in\mathbb R$, for $\lambda=0$; and $t\in(\frac{1}{2\lambda},+\infty)$, for $\lambda<0$.  Then, $(\sigma(t) \psi_t^* g  ,  \psi_t^* w)$ is a gradient soliton to the extended Ricci flow on $M.$ 
\end{proposition}
\begin{proof}
Setting $\overline g(t) = \sigma(t) \psi_t^* g$ and $\overline w(t) = \psi_t^* w,$ one has
\begin{align*}
 \frac{\partial}{\partial t} \overline g(t) &=  \sigma'(t) \psi_t^*g + \sigma(t)\psi^*_t(\mathcal L_{Y_t}g)= \psi^*_t( -2\lambda g + 2\nabla^2_{g} f) =-2\psi^*_t( \lambda g - \nabla^2_{g} f) \\
 &=-2 \psi^*_t(\Ric_g - \alpha_n d w \otimes d w  ) = -2 \Ric_{\overline g(t)} + 2\alpha_n d \overline w(t) \otimes d \overline w(t)  
\end{align*}
and
\begin{align*}
\frac{\partial}{\partial t} \overline w(t) &=  \psi^*_t(\mathcal L_{Y_t}w) = \frac{1}{\sigma(t)} \psi^*_t  \mathcal L_{\nabla_{g}f} w = \frac{1}{\sigma(t)} \psi^*_t \langle\nabla_{g} f,\nabla_{g} w\rangle_g\\
&= \frac{1}{\sigma(t)}\psi^*_t \Delta_{g}w=\Delta_{\overline g(t)}\overline w(t).
\end{align*} 
This completes the proof.
\end{proof}

Note that a gradient soliton to the extended Ricci flow on $M$ is characterized by means Proposition~\ref{self:similar:solution:alt}, i.e., we can say that a gradient soliton on $M$ is a triple $(g,f,w)$ satisfying \eqref{self-similar:sol1:alt} (as well \eqref{self-similar:sol2:alt}). It is steady if $\lambda = 0$, shrinking if $\lambda > 0$ and expanding if $\lambda <0.$  The function $f$ is called the potential function.

Now, by setting $\overline f(t)=\psi_t^*f$, using \eqref{self-similar:sol1:alt}, \eqref{self-similar:sol2:alt} and conformal theory, we obtain
 \begin{align*}
\left\{
\begin{array}{lcl}
\Ric_{\overline g} + \nabla^2_{\overline g}\overline f - \alpha_n d\overline w  \otimes d\overline w = \frac{\lambda}{\sigma(t)}\overline g,\\
\Delta_{\overline g}\overline w = \langle\nabla_{\overline g}\overline f, \nabla_{\overline g}\overline w\rangle_{\overline g}. 
\end{array}
\right.
\end{align*}
Moreover, by scaling $\overline g$ one can normalize $\lambda = 1/2$ in the shrinking case and, $\lambda=-1/2$ in the expanding case. For $\lambda = 1/2,$ $\sigma (t) = 1 - t > 0$ implies $t < 1.$ Setting $s = t - 1,$ we have  $s + 1 = t < 1,$ i.e., $s < 0$, and then, $\overline{g}(s)=\sigma(s)\psi^*_sg$, with $\sigma (s) = -s$ and $\psi_{-1} = \Id.$ For $\lambda = -1/2,$ $\sigma (t) = 1 + t > 0$ implies $t >- 1.$ Setting $s = t + 1,$ we have $s - 1 = t >-1,$ i.e., $s > 0,$ and then, $\overline{g}(s)=\sigma(s)\psi^*_sg$, with $\sigma (s) = s$ and $\psi_{1} = \Id.$ Hence, we immediately obtain the next proposition.
\begin{proposition}\label{self:similar:solution}
Consider an $n(\geqslant 3)$-dimensional smooth manifold $M$, and let $(\overline g(t), \overline w(t))$ be a gradient soliton to the extended Ricci flow on $M$. The following identities hold for all time $t$:
\begin{subequations}
\begin{empheq}[left=\empheqlbrace]{align}
&\Ric_{\overline g} + \nabla^2_{\overline g}\overline f - \alpha_n d \overline w  \otimes d \overline w = \frac{c}{2t}\overline g,\label{self-similar:sol1}\\
&\Delta_{\overline g} \overline w = \langle\nabla_{\overline g} \overline f, \nabla_{\overline g} \overline w\rangle_{\overline g}\label{self-similar:sol2},
\end{empheq}
\end{subequations}
 where $c  =  0$ in the steady case (for $t \in  \mathbb{R}$ and $\psi_0=\Id$), $c = -1$ in the shrinking case (for $t \in (-\infty, 0)$ and $\psi_{-1}=\Id$) and $c = -1$ in the expanding case (for $t \in (0,+\infty)$ and $\psi_1=\Id$), besides
 \begin{align}\label{self-solution1}
\frac{\partial}{\partial t} \overline f = \|\nabla_{\overline g} \overline f\|^2_{\overline g}.
 \end{align}  
The function $\overline f$ is still called the potential function.
\end{proposition}

Now, we will show how to construct a family of mean curvature solitons and establish a characterization of such a family. For it, let $M$ be an $n(\geqslant 3)$-dimensional smooth manifold, and let $(\overline{g}(t), \overline{w}(t))$ be a gradient soliton to the extended Ricci flow on $M$ for some initial value $(g,w)$ and with potential function $\overline f=\psi^*_tf$, where $\{\psi_t\}$ is the smooth one-parameter family of diffeomorphisms of $M$ generated by $Y_t = \frac{\nabla_g f}{\sigma(t)}$, with $\sigma(t) = -\kappa t$ and $\psi_{-\kappa} = \Id,$ where $\kappa = 1$ in the shrinking case (for $t \in (-\infty, 0$)), $\kappa = -1$ in the expanding case (for $t \in (0,+\infty)$) and $\sigma(t) = 1$ in the steady case (for $t \in  \mathbb{R})$ with  $\psi_{0} = \Id$ (see Proposition~\ref{self:similar:solution}). 

Given an $(n-1)$-dimensional smooth compact manifold $\Sigma$ without boundary, let $\{x(\,\cdot\,,t)\}$ be a smooth one-parameter family of immersions of $\Sigma$ into $M$, where $x(\,\cdot\,,t):=\psi(\,\cdot\,,-t-2\kappa)$ and $x(\,\cdot\,,t):=\psi(\,\cdot\,,-t)$ in the steady case. Note that $x(\,\cdot\,,-\kappa)=\psi(\,\cdot\,,-\kappa)=\Id$ and $x(\,\cdot\,,0)=\psi(\,\cdot\,,0)=\Id$. Moreover, when considering $x(\cdot, t) :=\psi(\cdot,-t-2\kappa),$ we are assuming $t \in (-2,0)$ in the shrinking case,  $t \in (0, 2)$ in the expanding case, and $t\in \mathbb{R}$ in the steady case. For each $t,$ set  $x_t = x(\,\cdot\,, t),$ $\Sigma_t$ for the hypersurface $x_t(\Sigma)$ of $(M,\overline g(t)),$ i.e., $\Sigma_t:=(x_t(\Sigma), \overline g(t)),$ and $\mathscr G :=\{\Sigma_t\}.$ In particular, if $\mathscr G$ evolves by MCF in the $(\overline g, \overline w)$-extended Ricci flow background on $M$, then it is a family of mean curvature solitons. 
Indeed, since $\overline g(t)=\sigma(t)\psi_t^*g$, we have $\nabla_g  f= \sigma(t)\nabla_{\overline g(t)}  \overline f$, and then
\begin{align*}
 H(p,t) &= \overline g(t)\Big( \frac{\partial}{\partial t} x(p,t),  e(p, t)\Big) =-\overline g(t)\Big(  \frac{ \nabla_g  f(p)}{\sigma(t)},  e(p, t)\Big) \\
 &= -\overline g(t)\Big(   \nabla_{\overline g(t)}  \overline f(p),  e(p, t)\Big) =- e(p, t) \overline f(p),
\end{align*}
it proves our claim. A sufficient condition for ensuring that $\mathscr G$ is a family of mean curvature solitons is that the hypersurface $\Sigma$ must be $f$-minimal. Besides, we will see that any family $\mathscr F$ of mean curvature solitons is given by the family $\mathscr G$ up to reparametrization, as stated below.
\begin{theorem}\label{NazasCharacterization}
If $\Sigma$ is a $f$-minimal hypersurface of $(M,g)$, then $\mathscr G$ is a family of mean curvature solitons. Moreover, any family $\mathscr F$ of mean curvature solitons is given by $\mathscr G$ up to reparametrization.
\end{theorem}

\begin{proof}
Let $\Sigma$ be a hypersurface of $(M,g)$ satisfying $H + e_o f = 0$ on $\Sigma,$ where $e_{o}$ is the  unit normal vector field on $\Sigma$. Take $\mathscr G =\{\Sigma_t\}$ the smooth one-parameter family of isometric immersions of $\Sigma$ into $M$ as above, so that $e_o = \sqrt{\sigma (t)} e(\,\cdot \,, t),$ and then $ \mathcal A_{e_o} = \sqrt{\sigma (t)}  \mathcal A_{e(\cdot,t)} $ that implies $H = \sqrt{\sigma(t)} H(\cdot, t).$ So, $H(\,\cdot\,,t) + e(\,\cdot\,, t) \overline f=0$. Thus,
\begin{align*}
 \Big(\frac{\partial}{\partial t}  x (\,\cdot\,, t)\Big)^\perp &= \overline g(t)\Big(\frac{\partial}{\partial t}  x (\,\cdot\,, t), e(\,\cdot\,,t)\Big)e(\,\cdot\,,t)
 = -\overline g(t)\Big(  \frac{\nabla_g f}{\sigma(t)}, e(\,\cdot\,, t) \Big)e(\,\cdot\,,t) \\
 &= -\overline g(t)\Big(   \nabla_{\overline g(t)}  \overline f,  e(\,\cdot\,, t)\Big) e(\,\cdot\,,t)=- e(\,\cdot\,, t) (\overline f) e(\,\cdot\,, t)  = H(\,\cdot\,,t) e(\,\cdot\,, t).   
\end{align*}
Now, we affirm that if a smooth family of hypersurfaces $\Sigma_t = x_t(\Sigma)$ satisfies $\langle\frac{\partial}{\partial t} x(p,t), e(p,t)\rangle =  H(p,t)$, then it can be everywhere locally reparametrized to a mean curvature flow. Indeed, if $\frac{\partial}{\partial t} x(p,t) = H(p,t) e(p,t) + X(p,t), $ where $X(p,t) \in d x_t(Tp\Sigma)\,\,\,\forall p \in \Sigma,$ take $\{\varphi_t\}$ the smooth one-parameter family of diffeomorphisms of $\Sigma$ generated by $Y(p,t) =  -[dx_t]^{-1}(X(p,t))$ and then consider the reparametrization $\widetilde x(p,t) = x(\varphi_t (p), t).$ By a straightforward computation  
 $\{\widetilde \Sigma _t := \widetilde x_t(\Sigma)\}$ evolves by MCF  in the $(\overline g, \overline w)$-extended Ricci flow background on $M$. Finally, by a simple analysis of this proof, we also show that any family $\mathscr F$ of mean curvature solitons is given by $\mathscr G$ up to reparametrization.
\end{proof}

We finalize this section by proving our Theorem~\ref{Huisken_monotonicity}. We begin determining how evolves the area of a mean curvature flow in an extended Ricci flow background.

\begin{lemma}\label{Huisken:lem}
 Let $(\overline{g}(t),\overline{w}(t)),$ $\overline f$ and $\mathscr F:=\{\Sigma_t\}$ be as in the statement of Theorem~\ref{Huisken_monotonicity}. Then, the following equation holds on $\Sigma_t:$ 
\begin{align*}
\dfrac{d}{dt}(d A_{\overline{g}}) = - (\overline{R}^i\!_i + H_{\overline{g}}^2 - \alpha_n |\widehat{\nabla}_{\overline{g}} \overline{w} |_{\overline{g}}^2) d A_{\overline{g}}.
\end{align*}
\end{lemma}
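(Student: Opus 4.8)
The plan is to differentiate the area element $dA_{\overline{g}}$ of $\Sigma_t$ directly, accounting for two separate sources of time-dependence: the evolution of the ambient metric $\overline{g}(t)$ (through the extended Ricci flow, expressed via the soliton equations \eqref{mod_grad_Ricci_soliton}), and the motion of the hypersurface $\Sigma_t$ itself under mean curvature flow \eqref{mean:sys}. The standard formula for the variation of the area element of an evolving hypersurface with velocity $\partial_t x = H e_0$ in a fixed metric contributes the familiar term $-H_{\overline{g}}^2\, dA_{\overline{g}}$, since the tangential derivative of the induced metric along the normal flow is governed by the second fundamental form, and tracing $g^{ij}\frac{\partial}{\partial t}\mathcal{A}$-type contributions against $\mathcal{A}_{ij}$ yields $-H^2$. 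The novelty here is the additional contribution coming from $\partial_t \overline{g}$.

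First I would invoke Proposition~\ref{mean_curv_flow_in_a_mod}, specifically the combined evolution equation \eqref{List:together:mean:curv}, which already packages together both effects on the induced boundary metric:
\begin{align*}
\frac{\partial}{\partial t}g_{ij} = - 2(R_{ij} - \alpha_n \widehat{\nabla}_i w \widehat{\nabla}_j w) - 2 H \mathcal{A}_{ij}.
\end{align*}
Since $dA_{\overline{g}} = \sqrt{\det(g_{ij})}\, dx$, the general identity $\frac{d}{dt}(dA) = \frac{1}{2} g^{ij}\big(\frac{\partial}{\partial t}g_{ij}\big)\, dA$ reduces the whole computation to tracing the right-hand side of \eqref{List:together:mean:curv}. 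Tracing gives $\frac{1}{2}g^{ij}\big(-2(\overline{R}_{ij} - \alpha_n \widehat{\nabla}_i \overline{w}\,\widehat{\nabla}_j \overline{w}) - 2H_{\overline{g}}\mathcal{A}_{ij}\big) = -(\overline{R}^i{}_i - \alpha_n |\widehat{\nabla}_{\overline{g}}\overline{w}|_{\overline{g}}^2 + H_{\overline{g}}^2)$, where I used $g^{ij}\mathcal{A}_{ij} = H_{\overline{g}}$ and $g^{ij}\widehat{\nabla}_i\overline{w}\,\widehat{\nabla}_j\overline{w} = |\widehat{\nabla}_{\overline{g}}\overline{w}|^2$. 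This reproduces exactly the claimed coefficient, so the proof is essentially immediate once \eqref{List:together:mean:curv} is in hand.

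The only point requiring care, rather than a genuine obstacle, is the bookkeeping of time-dependence: one must be sure that \eqref{List:together:mean:curv} already incorporates \emph{both} the ambient flow and the normal motion of $\Sigma_t$, so that applying the trace formula to it is not double-counting or omitting a term. This is exactly what Proposition~\ref{mean_curv_flow_in_a_mod} establishes via the diffeomorphisms $\{\phi_t\}$ that pull the evolving hypersurface back to a fixed domain, and by Remark~\ref{without} equation \eqref{List:together:mean:curv} holds without the auxiliary hypothesis $e_0 w = 0$, so the lemma as stated (for a general gradient soliton background, with no boundary condition imposed) follows without restriction. I would therefore present the proof as a one-line trace of \eqref{List:together:mean:curv} against the standard first-variation-of-area formula.
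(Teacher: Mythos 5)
Your proposal is correct and is essentially identical to the paper's own proof: the paper likewise derives the lemma by applying the first-variation-of-area formula $\frac{d}{dt}(dA_{\overline{g}}) = \frac{1}{2}\mathrm{tr}_{\overline{g}}\big(\frac{\partial}{\partial t}\overline{g}_{ij}\big)\,dA_{\overline{g}}$ to equation~\eqref{List:together:mean:curv} of Proposition~\ref{mean_curv_flow_in_a_mod}, invoking Remark~\ref{without} to drop the hypothesis $e_0 w = 0$. Your trace computation and sign bookkeeping match the stated result exactly.
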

\begin{proof}
The lemma follows by using the well-known formula
\begin{align*}
\dfrac{d}{dt}(d A_{\overline{g}}) = \dfrac{1}{2}\mbox{tr}_{(\overline{g}_{ij}(t))} \Big( \dfrac{\partial}{\partial t} \overline{g}_{ij}\Big) dA_{\overline{g}}
\end{align*}
and equation~\eqref{List:together:mean:curv} in Proposition~\ref{mean_curv_flow_in_a_mod} (see also Remark~\ref{without}).
\end{proof}

\begin{proof}[\bf Proof of Theorem~\ref{Huisken_monotonicity}]
Lemma~\ref{Huisken:lem} and a straightforward computation  yield
\begin{align*}
    \dfrac{d}{dt}\int_{\Sigma_t} e^{-\overline{f}} d A_{\overline{g}} &= - \int_{\Sigma_t} \Big(\frac{d}{dt}\overline{f} + \overline{R}^i\!_i + H_{\overline{g}}^2 - \alpha_n |\widehat{\nabla}_{\overline{g}} \overline{w} |_{\overline{g}}^2\Big) e^{-\overline{f} } d A_{\overline{g}}.
\end{align*}
Therefore,
\begin{align*}
\dfrac{d}{dt}\int_{\Sigma_t} e^{-\overline{f}} d A_{\overline{g}}  &= - \int_{\Sigma_t} \Big(\dfrac{\partial}{\partial t}\overline{f} + H_{\overline{g}} e_t \overline{f}  + \overline{R}^i\!_i + H_{\overline{g}}^2 - \alpha_n |\widehat{\nabla}_{\overline{g}} \overline{w} |_{\overline{g}}^2\Big) e^{-\overline{f}} d A_{\overline{g}}.
\end{align*}

First, assume $(\overline{g}(t), \overline{w}(t))$ is a gradient steady soliton. In this case, we can take traces in the first equation 
of~\eqref{mod_grad_Ricci_soliton}  on $\Sigma_t$ to get
\begin{align*}
0=\overline{R}^i\!_i + \overline{\nabla}_i \overline{\nabla}^i \overline{f} -\alpha_n  |\widehat{\nabla}_{\overline{g}} \overline{w}|_{\overline{g}}^2=\overline{R}^i\!_i + \widehat{\nabla}^i \widehat{\nabla}_i \overline{f} - H_{\overline{g}} e_t \overline{f} -\alpha_n  |\widehat{\nabla}_{\overline{g}} \overline{w}|_{\overline{g}}^2.
\end{align*}
Then, using \eqref{self-solution}, we obtain
\begin{align*}
\dfrac{d}{dt}\int_{\Sigma_t} e^{-\overline{f}} d A_{\overline{g}}
&= - \int_{\Sigma_t} \Big(|\nabla_{\overline{g}} \overline{f}|_{\overline{g}}^2 - \widehat{\Delta}_{\overline{g}}\overline{f} +2 H_{\overline{g}} e_t \overline{f}  + H_{\overline{g}}^2 \Big)e^{-\overline{f}} d A_{\overline{g}} \\
&= - \int_{\Sigma_t} \Big( |\widehat{\nabla}_{\overline{g}} \overline{f}|_{\overline{g}}^2 +(e_t \overline{f})^2 - \widehat{\Delta}_{\overline{g}}\overline{f} + 2H_{\overline{g}} e_t \overline{f}  + H_{\overline{g}}^2\Big)e^{-\overline{f}} d A_{\overline{g}} \\
&= - \int_{\Sigma_t} \Big(H_{\overline{g}} +e_t \overline{f} \Big)^2 e^{-\overline{f}} d A_{\overline{g}},
\end{align*}
where in the second line we have used  the equality
$$\widehat{\Delta}_{\overline{g}} e^{- \overline{f}} = 
(|\widehat{\nabla}_{\overline{g}} \overline{f}|_{\overline{g}}^2 - 
\widehat{\Delta}_{\overline{g}}\overline{f})e^{-\overline{f}}$$ 
and Stokes' theorem. Since the boundary integrand in the right-hand side 
is nonnegative, we have immediately the result of the theorem for the steady case.

For the shrinking case, we claim that the function
$$(-\infty,0)\ni t\mapsto \tau^{-( n - 1 ) / 2}\int_{\Sigma_t} e^{-\overline{f}} dA_{\overline{g}}$$ 
is non increasing, where $\tau = -t$. Indeed, 
as above, we take traces in the first equation 
of~\eqref{mod_grad_Ricci_soliton} on $\Sigma_t$ to obtain
\begin{align*}
- \dfrac{n - 1}{2t}=\overline{R}^i\!_i + 
\overline{\nabla}^i \overline{\nabla}_i \overline{f} -
\alpha_n  |\widehat{\nabla}_{\overline{g}} \overline{w}|_{\overline{g}}^2 
= \overline{R}_i^i + \widehat{\nabla}^i \widehat{\nabla}_i \overline{f}- 
H_{\overline{g}} e_t \overline{f} -\alpha_n  |\widehat{\nabla}_{\overline{g}} \overline{w}|_{\overline{g}}^2.
\end{align*}
Then,
\begin{align*}
&\dfrac{d}{dt}\Big(\tau^{-( n - 1 ) / 2}\int_{\Sigma_t} e^{-\overline{f}} d A_{\overline{g}}\Big)\\
&= - \tau^{-( n - 1 ) / 2}\int_{\Sigma_t} \Big( |\widehat{\nabla}_{\overline{g}} \overline{f}|_{\overline{g}}^2 +(e_t \overline{f})^2 - \widehat{\Delta}_{\overline{g}}\overline{f} + 2H_{\overline{g}} e_t \overline{f}  + H_{\overline{g}}^2- \dfrac{n - 1}{2t}\Big)e^{-\overline{f}} d A_{\overline{g}} \\
&\quad+\frac{ n - 1 }{2}\tau^{-\frac{( n - 1 )}{2} - 1}\int_{\Sigma_t} e^{-\overline{f}} dA_{\overline{g}}\\
&= -\tau^{-( n - 1 ) / 2} \int_{\Sigma_t} \Big(H_{\overline{g}} +e_t \overline{f} \Big)^2 e^{-\overline{f}} d A_{\overline{g}}.
\end{align*}
This proves the claim, and so the theorem for the shrinking case. Finally, in a similar way, one proves the expanding case.
\end{proof}

\begin{remark} \label{rem: recover5}
For the shrinking case in Theorem~\ref{Huisken_monotonicity},  we recover 
Huisken's monotonicity formula~\cite[Thm.~3.1]{Huisken1990}, 
by taking $M = \mathbb{R}^n$, $g_{\alpha\beta}(\tau) = \delta_{\alpha \beta}$, $\overline{f}(x, \tau) = |x|^2/(4 \tau)$ 
and $\overline{w}(\tau)= w$ constant.
\end{remark}

\section{Examples of soliton solutions to the extended Ricci flow}

In this section, we show how to obtain a gradient soliton solution to the extended Ricci flow, and then we are obtaining its corresponding extended Ricci flow and explicit examples of mean curvature flow in an extended Ricci flow background (see Propositions~\ref{self:similar:solution:alt} and~\ref{self:similar:solution} and Theorem~\ref{NazasCharacterization}). For explicit examples of mean curvature flow in a Ricci flow background, see the work by Yamamoto~\cite{yamamoto2018examples}.

Let $g = \frac{1}{F^2}g_0$ be a Riemannian metric on $\mathbb{R}^n$, where $g_0$ stands for the Euclidean metric and $F$ is a nonzero smooth function on $\mathbb{R}^n$, and  consider 
\begin{subequations}\label{Bizu}
\begin{empheq}[left=\empheqlbrace]{align}
&Ric_{g} + \nabla^2_{g} f - \alpha_n d w  \otimes d w = \lambda g,\label{Bizu1}\\
&\Delta_{g} w = \langle\nabla_{g} f, \nabla_{g} w\rangle_{g}\label{Bizu2}.
\end{empheq}
\end{subequations}
Since the metric $g$ is conformal to $g_0$, we have 
\[Ric_{g} =\frac{1}{F^2}\Big((n - 2)F\nabla^2  F + (F\Delta F - (n - 1)|\nabla F|^2)g_0\Big)\]
and the following equations are valid
\begin{align*}
(\nabla^2_g h)_{ij} &= h_{x_ix_j} +\frac{F_{x_j}}{F} h_{x_i} +\frac{F_{x_i}}{F}
h_{x_j} \quad \rm{for} \quad i \neq j,\\
(\nabla^2_g h)_{ii} &= h_{x_ix_i} + 2\frac{F_{x_i}}{F} h_{x_i} -\sum_k\frac{F{x_k}}{F} h_{x_k} \quad \rm{for}\quad i = j,  
\end{align*}
for any smooth function $h$ on $\mathbb{R}^n$. Hence,
$$\Delta_{g} h = F^2 \big(\sum_k h_{x_kx_k} + (2 - n) \frac{1}{F} \sum_kF_{x_k}h_{x_k}\big).$$

We find solutions of Eq.~\eqref{Bizu1} (and thus of~\eqref{Bizu2}) of the form $f (\xi )$ and $w(\xi )$, that is, they only depend on $\xi =\sum_{i=1}^n \alpha_ix_i$ with $\alpha_i \in \mathbb{R}$ and $\sum_{i = 1}^n \alpha_i^2 = 1$. The following proposition provides the system of ordinary differential equations that must be satisfied by such solutions so that we can obtain all parameters necessary to construct gradient soliton solution to extended Ricci flow on $M$.
\begin{proposition}\label{Feitosa}
Let $\mathbb{R}^n$, with $n \geqslant 3$, be an Euclidean space with coordinates $x = (x_1, \ldots , x_n)$ and metric $g = \frac{1}{F^2(\xi)}g_0$, where $F (\xi ) \in C^\infty(\mathbb{R}^n)$, $\xi  = \sum_{i = 1}^n \alpha_ix_i$ with $\alpha_i \in \mathbb{R}$ and $\sum_{i = 1}^n \alpha_i^2 = 1$. We can obtain smooth functions  $f (\xi )$ and $ w(\xi )$ satisfying~\eqref{Bizu1} (and thus~\eqref{Bizu2} as well) by means of the equation
\begin{align*}
\frac{1}{F}  F''- \frac{n - 1}{F^2} {F'}^2 +\frac{1}{n - 2}{f'}^2 - \frac{w''f'}{(n - 2)w'} = \frac{\lambda}{F^2}.
\end{align*}
\end{proposition}
\begin{proof}
We need to analyze Eq.~\eqref{Bizu1} in two cases. For $i \neq j$, it rewrites as
\begin{align}\label{Holds}
(n - 2) \frac{F_{x_ix_j}}{F} + f_{x_i x_j} +\frac{F_{x_j}}{F} f_{x_i} +\frac{F_{x_i}}{F}f_{x_j} - \alpha_n w_{x_i} w_{x_j} = 0
\end{align}
and for $i=j$,
\begin{align}\label{BIZU1}
(n - 2) \frac{F_{x_ix_i}}{F} + \!\sum_k\!\Big( \frac{ F_{x_k x_k} }{F}
-\! \frac{n - 1}{F^2}F^2_{x_k} -\!\frac{F_{x_k}}{F}f_{x_k}\Big) + f_{x_i x_i} +2\frac{F_{x_i}}{F} f_{x_i}  -\! \alpha_n w^2_{x_i}  \!=\! \frac{\lambda}{F^2}.
\end{align}
On the other hand, equation~\eqref{Bizu2} becomes
\begin{align}\label{add:Naza}
F^2 \big(\sum_k w_{x_kx_k} + (2 - n) \frac{1}{F} \sum_kF_{x_k}w_{x_k}\big) = F^2 \sum_kf_{x_k}w_{x_k}.
\end{align}
We now assume that the argument $\xi$ of the functions $F(\xi )$, $f (\xi )$ and $w(\xi)$ is of the form $\xi =\sum^n_{i = 1} \alpha_i x_i$. Hence, we have $F_{x_i}= F'\alpha_i$ and $F_{x_ix_j} = F''\alpha_i\alpha_j$ where the superscript $'$ denotes the derivative with respect to $\xi$.
Using the same reasoning for $f$ and $w$, equations~\eqref{Holds} and~\eqref{BIZU1} become
\begin{align}\label{Sys:Bizu1}
(n - 2) \frac{F''}{F} + f'' +2\frac{F'}{F} f'- \alpha_n {w'}^2 = 0
\end{align}
and
\begin{align}\label{Sys:Bizu2}
&(n - 2) \frac{F''}{F}\alpha^2_i + \sum_k\Big( \frac{F''}{F}   \alpha_k^2 - \frac{n - 1}{F^2}{F'}^2\alpha_k-\sum_k\frac{F'}{F}f'\alpha^2_k\Big) + f''\alpha_i^2 \\
&\quad +2\frac{F'}{F} f'\alpha^2_i - \alpha_n {w'}^2 \alpha_i^2 = \frac{\lambda}{F^2}.\nonumber
\end{align}
Since $n \geqslant 3$, we can choose this invariance so that at least two indices $i, j$ are such that $\alpha_i\alpha_j \neq 0$ and $\sum^n_{i=1} \alpha^2_i = 1$, and then equations~\eqref{Sys:Bizu1} and \eqref{Sys:Bizu2} become
\begin{align}\label{Guerraesua}
(n - 2) \frac{F''}{F} + f'' +2\frac{F'}{F} f'- \alpha_n {w'}^2 = 0
\end{align}
and
\begin{align}\label{Guerraesua1}
(n - 2) \frac{F''}{F}\alpha^2_i + \frac{1}{F}  F''- \frac{n - 1}{F^2} {F'}^2-\frac{F'}{F}f'+ f''\alpha_i^2 +2\frac{F'}{F} f'\alpha^2_i  - \alpha_n {w'}^2 \alpha_i^2 = \frac{\lambda}{F^2 }.
\end{align}
 Plugging~\eqref{Guerraesua} into~\eqref{Guerraesua1}, one has
\begin{align}\label{Vamosla}
\frac{1}{F}  F''   
- \frac{n - 1}{F^2} {F'}^2 -\frac{F'}{F}f' = \frac{\lambda}{F^2}.
\end{align}
Eq.~\eqref{add:Naza} provides $w'' -(n - 2)\frac{1}{F} F'w' =  f'w'.$ Assuming $w' \neq 0$ and using~\eqref{Vamosla}, we obtain
\begin{align*}
\frac{1}{F}  F''   
- \frac{n - 1}{F^2} {F'}^2 +\Big(\frac{1}{n - 2}f' - \frac{w''}{(n - 2)w'}\Big)f' = \frac{\lambda}{F^2}.
\end{align*}
Therefore
\begin{align*}
\frac{1}{F}  F''   
- \frac{n - 1}{F^2} {F'}^2 +\frac{1}{n - 2}{f'}^2 - \frac{w''f'}{(n - 2)w'} = \frac{\lambda}{F^2}.
\end{align*}
This finishes the proof of the proposition.
\end{proof}
\begin{remark}
For constructing a family of mean curvature solitons $\mathscr G$ in the $(\overline g, \overline w)$-extended Ricci flow background on $(\mathbb R^n,\frac{1}{F^2(\xi)}g_0)$, it is enough to consider a $f$-minimal hypersurface $\Sigma$ in this geometric ambient space. Indeed, it follows immediately from Propositions~\ref{self:similar:solution:alt} and~\ref{self:similar:solution}, Theorem~\ref{NazasCharacterization} and Proposition~\ref{Feitosa}.
\end{remark}

In what follows, we are using Proposition~\ref{Feitosa} to show how to obtain explicit parameter functions for constructing gradient soliton solutions to the extended Ricci flow on $(\mathbb{R}^n, \frac{1}{F^2(\xi)}g_0).$ 

\begin{example}
For $f(\xi) = e^\xi$ and $F(\xi) = e^{-\xi},$ we have
\begin{align*}
1 - (n - 1) +\frac{e^{2\xi}}{n - 2} - \frac{w''e^\xi}{(n - 2)w'} =  \lambda  e^{2\xi}.
\end{align*}
Therefore
\begin{align*}
\frac{w''}{w'} = - (n - 2)^2  e^{-\xi} - \lambda (n - 2) e^{\xi} +   e^{\xi}, 
\end{align*}
and   then
\begin{align*}
\ln w' = (n - 2)^2  e^{-\xi}  -\lambda(n - 2) e^{\xi} + e^{\xi} + c,
\end{align*}
for some constant $c$. Hence,
\begin{align*}
w = \int e^{[(n - 2)^2  e^{-\xi}  -\lambda(n - 2) e^{\xi} + e^{\xi} + c]} d \xi. 
\end{align*}
\end{example}

\begin{example}
For $f(\xi) = \tan{\xi} $ and $F(\xi) =  \cot{\xi},$ with $ 0 <\xi < \frac{\pi}{2},$ we have  
\begin{align*}
2\csc{\xi} -  \frac{4(n - 1)}{\sin^2{2\xi}} +\frac{\sec^4{\xi}}{n - 2} - \frac{w''\sec^2{\xi}}{(n - 2)w'} = \lambda \tan^2{\xi}.
\end{align*}
Therefore,
\begin{align*}
\frac{w''\sec^2{\xi}}{(n - 2)w'} = 2\csc{\xi} -  \frac{4(n - 1)}{\sin^2{2\xi}} +\frac{\sec^4{\xi}}{n - 2} -  \lambda \tan^2{\xi}.
\end{align*}
Hence,
\begin{align*}
\frac{w''}{w'} &=\frac{n - 2}{\sec^2{\xi}}\Big( 2\csc{\xi} -  \frac{4(n - 1)}{\sin^2{2\xi}} +\frac{\sec^4{\xi}}{n - 2} -  \lambda \tan^2{\xi}\Big)\\
&= 2(n - 2)\cot{\xi} -  (n - 1) (n - 2) \csc{\xi} +\sec^2{\xi} -  \lambda (n - 2) \sin^2{\xi}.
\end{align*}
Whence, 
\begin{align*}
\ln w' = \int \Big( 2(n - 2)\cot{\xi} -  (n - 1) (n - 2) \csc{\xi} +\sec^2{\xi} -  \lambda (n - 2) \sin^2{\xi}\Big) d \xi + c,
\end{align*}
for some constant $c$. Thus,
\begin{align*}
w' &= e^{\int \big( 2(n - 2)\cot{\xi} -  (n - 1) (n - 2) \csc{\xi} +\sec^2{\xi} -  \lambda (n - 2)\sin^2{\xi}\big) d \xi + c}\\
 &= e^{ 2(n - 2)\ln{\sin{\xi}} -  (n - 1) (n - 2) \big(\ln{\sin{\frac{\xi}{2}}} -\ln{\cos{\frac{\xi}{2}}}\big) + \tan{\xi} -  \lambda (n - 2) \big( \frac{1}{2}\xi -\frac{1}{2}\sin \xi \cos \xi \big) + c}.
 \end{align*}
 So,
 \begin{align*}
 w &= \int e^{ 2(n - 2)\ln{\sin{\xi}} -  (n - 1) (n - 2) \big(\ln{\sin{\frac{\xi}{2}}} -\ln{\cos{\frac{\xi}{2}}}\big) + \tan{\xi} -  \lambda (n - 2) \big( \frac{1}{2}\xi -\frac{1}{2}\sin \xi \cos \xi \big) + c} d \xi. 
 \end{align*}
\end{example}

\begin{example}\label{sphere}
Let $\mathbb{B}_+^n\subset \mathbb{R}^n, \,n\geqslant3,$ be a unitary upper half ball with metric  $g = \frac{1}{(1 +  x_{n})^2}g_0.$ Note that its boundary is the standard unitary sphere $(\mathbb{S}^{n-1},g_0)$, $\xi = x_n$ and $F(x_n) = 1 +  x_{n}$. Moreover, the mean curvature of $\,(\mathbb{S}^{n - 1}, g_0)$ with respect to $e_0 = -e_n$  is $H_{g_0}= n - 1,$ so that we  can take $f(x) = (n - 1)\langle x, e_n\rangle =(n - 1) x_n,$ since $H_{g_0} + e_0 f = 0.$ By Proposition~\ref{Feitosa}, 
\begin{align*}
\frac{1}{F}  F''- \frac{n - 1}{F^2} {F'}^2 +\frac{1}{n - 2}{f'}^2 - \frac{w''f'}{(n - 2)w'} = \frac{\lambda}{F^2}.
\end{align*}
Since $F' = 1$ and $F'' = 0$, we get
\begin{align*}
 \frac{(n - 1)w''}{(n - 2)w'} = - \frac{n - 1}{(1 +  x_{n})^2}  +\frac{(n - 1)^2}{n - 2}  - \frac{\lambda}{(1 +  x_{n})^2}.
\end{align*}
So,
\begin{align*}
 \frac{w''}{w'} = - \frac{n - 2}{(1 +  x_{n})^2}  + n - 1  - \frac{\lambda (n - 2)}{(n - 1)(1 +  x_{n})^2 }.
\end{align*}
Whence,
\begin{align*}
\ln w' &= \frac{n - 2}{1 +  x_{n}}  + (n - 1)x_n  + \frac{\lambda (n - 2)}{(n - 1)(1 +  x_{n}) } + c,
\end{align*}
for some constant $c$, and then
\begin{align*}
 w &= \int e^{\frac{n - 2}{1 +  x_{n}}  + (n - 1)x_n  + \frac{\lambda (n - 2)}{(n - 1)(1 +  x_{n}) } + c} dx_n.
\end{align*}
\end{example}

\section{Acknowledgements}
The authors would like to express their sincere thanks to Lucas Ambrozio (IMPA), Abdênago Barros (UFC), Valter Borges (UFPA), Ronaldo de Lima (UFRN) and Marcus Marrocos (UFAM) for useful comments, discussions and constant encouragement. The second author is also grateful to Department of Mathematics at Universidade Federal de São Carlos for a good atmosphere while this work was done. José N. V. Gomes has been partially supported by Conselho Nacional de Desenvolvimento Científico e Tecnológico (CNPq), of the Ministry of Science, Technology and Innovation of Brazil, Grant 310458/2021-8. Matheus Hudson has been partially supported by Fundação de Amparo à Pesquisa do Estado do Amazonas (FAPEAM), Grant 062.00931/2013.


\begin{thebibliography}{9}
\bibitem{Ara03} H. Araújo, Critical points of the total scalar curvature plus total mean curvature functional, Indiana Univ. Math. J. 52 (1) 2003), 85-107.
\bibitem{alias2020mean} J. Alias, J. H. de Lira, and M. Rigoli. Mean curvature flow solitons in the presence of conformal vector fields, J. Geom. Anal. 30 (2) (2020), 1466–1529.
\bibitem{cheng2015stability} X. Cheng and D. Zhou, Stability properties and gap theorem for complete f-minimal hypersurfaces, Bull. Braz. Math. Soc. (N. S.) 46 (2) (2015), 251-274.
\bibitem{cheng2015simons} X. Cheng, T. Mejia and D. Zhou, Simons-Type Equation for f-Minimal Hypersurfaces and Applications, J. Geom. Anal. 25 (2015), 2667-2686.
\bibitem{cheng2020minimal} X. Cheng and D. Zhou, Minimal submanifolds in a metric measure space, SN Partial Differ. Equ. Appl. 1 (5) (2020), 1-28.
\bibitem{Klaus-Ecker} K. Ecker, A formula relating entropy monotonicity to Harnack inequalities, Comm. Anal. Geom. 15 (5) (2007), 1025-1061.
\bibitem{Gibbons-Hawking} G. Gibbons and  S. Hawking, Action integrals and partition functions in quantum gravity, Phys. Rev. D 15 (1977), 2752-2756.
\bibitem{Gromov} M. Gromov, Isoperimetry of waists and concentration of maps, Geom. Funct. Anal. 13 (2003), 178-215.
\bibitem{Hamilton} R. Hamilton, Harnack estimate for the mean curvature flow, J. Differential Geom. 41 (1) (1995), 215-226.
\bibitem{Hamilton92}R. Hamilton, Three-manifolds with positive Ricci curvature, J. Differential Geom. 17 (1982) 255-306.
\bibitem{Huisken} G. Huisken, Flow by mean curvature of convex surfaces into spheres,  J. Differential Geom.  20 (1) (1984), 237-266.
\bibitem{Huisken1986} G. Huisken, Contracting convex hypersurfaces in Riemannian manifolds by their mean curvature, Invent. Math. 84 (3) (1986), 463-480.
\bibitem{Huisken1990} G. Huisken, Asymptotic behavior for singularities of the mean curvature flow, J. Differential Geom. 31 (1) (1990), 285-299.
\bibitem{Kleiner_Lott} B. Kleiner and J. Lott, Notes on Perelman’s papers, Geom.  Topol. 12 (2008), 2587-2855.
\bibitem{Bernhard_List} B. List, Evolution of an extended Ricci flow system, Comm. Anal. Geom. 16 (2008), 1007-1048.
\bibitem{Bernhard_List_Thesis}B. List, Evolution of an extended Ricci flow system, PhD thesis, 2006. https://refubium.fu-berlin.de/handle/fub188/12905
\bibitem{John_Lott}J. Lott,  Mean Curvature Flow in a Ricci Flow Background, Comm. Math. Phys. 313 (2012), 517-533.
\bibitem{Mantegazza} C. Mantegazza, Lecture Notes on Mean Curvature Flow (Progress in Mathematics), Springer Basel, 2011.
\bibitem{Perelman} G. Perelman, The entropy formula for the Ricci flow and its geometric applications, arXiv:math/0211159v1, 2002.
\bibitem{wei2017lower} Y. Wei. On lower volume growth estimate for f -minimal submanifolds in gradient shrinking soliton, Int. Math. Res. Not. 9 (2017), 2662–2685.
\bibitem{James_Simons}  J. Simons, Minimal varieties in Riemannian manifolds, Ann. of Math. 88 (1968), 62-105.
\bibitem{sheng2015} W. Sheng and H. Yu, F-stability of f-minimal hypersurface, Proc. Amer. Math. Soc. 143 (8) (2015), 3619-3629.
\bibitem{yamamoto2018examples} H. Yamamoto, Examples of Ricci-mean curvature flows, J. Geom. Anal. 28 (2) (2018), 983-1004.
\bibitem{York} J. York, Role of conformal three-geometry in the dynamics of gravitation, Phys. Rev. Lett. 28 (1972), 1082-1085.
\end{thebibliography}
\end{document}